\newtheorem{theorem}{Theorem}
\numberwithin{theorem}{section}
\newtheorem{proposition}[theorem]{Proposition}
\newtheorem{lemm}[theorem]{Lemma}
\newtheorem{corollary}[theorem]{Corollary}
\newtheorem{remark}[theorem]{Remark}
\newtheorem{example}[theorem]{Example}
\newtheorem{conjecture}[theorem]{Conjecture}
\theoremstyle{definition}
\newcommand{\PP}{\mathbb{P}}
\newcommand{\RR}{\mathbb{R}}
\newcommand{\CC}{\mathbb{C}}
\newcommand{\NN}{\mathbb{N}}
\newcommand{\SSS}{\mathbb{S}}
 \title{Pencils of Quadrics: Old and New}
  \author{Claudia Fevola}
  \address{%
  MPI for Mathematics in the Sciences, Leipzig \\
\email{claudia.fevola@mis.mpg.de}
}
\author{Yelena Mandelshtam}
\address{%
University of California, Berkeley \\
\email{yelenam@berkeley.edu}
}
\author{Bernd Sturmfels}
\address{%
  MPI for Mathematics in the Sciences, Leipzig \\
\email{bernd@mis.mpg.edu }
}
\date{2020/09/09}
\begin{document}
\maketitle
\begin{abstract}
\noindent
 Two-dimensional linear spaces of symmetric matrices 
are classified by Segre symbols.  
After reviewing known facts from
linear algebra and projective geometry, we address new questions
motivated by algebraic statistics and optimization.
We compute the reciprocal curve and the maximum likelihood degrees,
and we study strata of pencils in the Grassmannian.
\end{abstract}

\section{Introduction}

A pencil of quadrics is a two-dimensional 
linear subspace $\mathcal{L} $ in 
 the space $ \SSS^n$ of (real or complex) symmetric $n \times n$ matrices.
It is a point in the Grassmannian ${\rm Gr}(2,\SSS^n)$, and it
specifies a line  $\PP \mathcal{L}$  in the projective space
 $\PP(\SSS^n) \simeq \PP^{\binom{n+1}{2}-1}$.
The group ${\rm GL}(n)$ acts on $\SSS^n$ by congruence and this
induces an action on ${\rm Gr}(2,\SSS^n)$. We say that two 
pencils are {\em isomorphic} if they lie in the same ${\rm GL}(n)$-orbit.

Fix a pencil $\mathcal{L}$ with basis $\{A,B\}$. The determinant
$\,{\rm det}(\mathcal{L}) \,=\, {\rm det}(\lambda A+ \mu B) \,$
 is well-defined up to the action of ${\rm GL}(2)$ by changing basis in $\mathcal{L}$.
The zeros of this binary form are a multiset of size $n$ in the line $\PP^1$,
well-defined  up to isomorphism of  $\PP^1$.
We exclude pencils $\mathcal{L}$ that are {\em singular}, meaning that ${\rm det}(\mathcal{L}) = 0 $.
The singular pencils form a subvariety ${\rm Gr}(2,\SSS^n)^{\rm sing}$ in the Grassmannian. 
We are interested in a natural stratification
of the open set of all regular pencils:
$$ {\rm Gr}(2,\SSS^n)^{\rm reg}\,\,\, = \,\,\,
 {\rm Gr}(2,\SSS^n) \mathbin{\big\backslash}   {\rm Gr}(2,\SSS^n)^{\rm sing}. $$
Each stratum is indexed by a {\em Segre symbol} $\sigma$. This is
a multiset of partitions
whose parts add up to $n$ in total. One exception:
the singleton $[ (1,1,\ldots, 1) ]$ is not a Segre symbol.
The number $S(n)$ of Segre symbols 
was already of interest to Arthur Cayley in 1855. 
In \cite[p.~316]{cayley}, he derived the generating function
$$ \sum_{n=1}^\infty  S(n) x^n \,\, = \,\, 
\prod_{k \geq  1}\frac{1}{(1-x^{k})^{P(k)}} \,- \,\frac{1}{1{-}x}  \,\, = \,\,
2x^2+5x^3+13x^4+26x^5+57x^6+110x^7 +\, \cdots, $$
where $P(k)$ is the number of partitions of the integer $k$.
The two Segre symbols for $n=2$ are
$[1,1] $ and $[2]$.
For $n=3$ and $n=4$ they
are shown in Figure~\ref{fig:segre34}.

The Segre symbol  $\sigma = \sigma (\mathcal{L})$
of a given pencil $\mathcal{L}$ can be computed as follows.
Pick a basis $\{A,B\}$ of $\mathcal{L}$, where $B$ is invertible,
and find the Jordan canonical form of $AB^{-1}$.
Each eigenvalue of $AB^{-1}$ determines a partition,
according to the sizes of its Jordan blocks.
Then $\sigma$ is the associated multiset of partitions.
It turns out that $\sigma$ does not depend on the choice of basis $\{A,B\}$.
For  the relevant background  in linear algebra see
\cite{DE, thompson, uhlig76} and  Section~\ref{sec2} below.

The role of Segre symbols in projective geometry can
be stated as follows.

\begin{theorem}[Weierstrass-Segre] \label{thm:WS}
Two pencils of quadrics in $\SSS^n$ are isomorphic if and only if  their
Segre symbols agree and their determinants define the same multiset 
of $n$ points on the projective line $\PP^1$,  up to isomorphism of $\PP^1$.
\end{theorem}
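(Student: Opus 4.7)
\medskip

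The plan is to prove the two directions separately. The \emph{only if} direction is routine. If $\mathcal{L}' = P\mathcal{L}P^T$ for some $P\in {\rm GL}(n)$, and $\{A,B\}$ is a basis of $\mathcal{L}$, then $\{PAP^T, PBP^T\}$ is a basis of $\mathcal{L}'$, and
$$\det(\lambda PAP^T+\mu PBP^T) \;=\; \det(P)^2 \cdot \det(\lambda A+\mu B),$$
so the binary forms agree up to scalar and define the same multiset on $\PP^1$. Picking $B$ invertible, $(PAP^T)(PBP^T)^{-1} = P(AB^{-1})P^{-1}$, so the Jordan data, hence the Segre symbol, is preserved.

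For the \emph{if} direction, the idea is to exhibit a \textbf{normal form} for each pencil, depending only on $\sigma$ and the multiset of roots, and show that every regular pencil is ${\rm GL}(n)$-congruent to its normal form. Given a pencil $\mathcal{L}$ with basis $\{A,B\}$ and $B$ invertible, first use the ${\rm GL}(2)$-action on $\mathcal{L}$ to place the roots of $\det(\lambda A + \mu B)$ at the prescribed locations in $\PP^1$. Decompose $\CC^n$ as the direct sum of generalized eigenspaces of $M := AB^{-1}$, one summand per distinct root of $\det(\mathcal{L})$. On each summand one then places a Jordan block of $M$ of the size prescribed by the corresponding part of the partition in $\sigma$.

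The nontrivial point is that this decomposition can be made \emph{$B$-orthogonal} and that a basis can be chosen in which $B$ itself becomes a block-diagonal matrix of skew (``anti-identity'') blocks and $A = MB$ becomes the corresponding symmetric Jordan block. Concretely, for a single eigenvalue $\alpha$ with Jordan block of size $k$, the pair
$$ B_k \;=\; \begin{pmatrix} & & 1 \\ & \iddots & \\ 1 & & \end{pmatrix},\qquad A_k \;=\; \alpha B_k + \begin{pmatrix} & \iddots & \\ \iddots & \iddots & \\ 1 & & \end{pmatrix}$$
is the required symmetric model (with an analogous rational model at $\alpha=\infty$). Assembling these block pairs for each part of each partition in $\sigma$, paired with the corresponding root, produces the desired canonical pencil, which by construction depends only on $(\sigma,\det(\mathcal{L}))$. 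Two pencils with the same data are therefore each congruent to the same canonical pencil, and hence to each other.

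The main obstacle is the block-level step: proving that on a single generalized eigenspace, the restriction of $B$ admits a basis in which it takes the anti-diagonal form above, compatibly with $A$ being a single Jordan block for $M=AB^{-1}$. This is the classical symmetric Kronecker-Weierstrass theorem, and it is what forces the Segre symbol to record Jordan-block sizes rather than merely the eigenvalue multiplicities. A clean way to carry out this step is by induction on the block size $k$, using that the kernel of $(M-\alpha I)^j$ is $B$-dual to $(M-\alpha I)^{k-j}\CC^k$, so that a suitably chosen generator of the Jordan chain together with its $M$-iterates produces the anti-diagonal form; the details are worked out in the references \cite{thompson,uhlig76} cited in the introduction, and will be summarized in Section~\ref{sec2}.
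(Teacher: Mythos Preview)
Your proposal is correct and follows essentially the same route as the paper: both directions reduce to the elementary-divisor normal form, with block pairs $(A_k,B_k)$ identical to the paper's $(P_e(\alpha),Q_e)$ in Section~\ref{sec2}, and the key congruence step (your ``symmetric Kronecker--Weierstrass theorem'') is precisely Corollary~\ref{transCoord}, which the paper likewise states without proof and attributes to \cite[Theorem~1, p.~278]{HP}. Your sketch is in fact more detailed than the paper's treatment, which simply cites the classical references; the only small addition worth making explicit is that the ${\rm GL}(2)$ basis change you invoke to align the root multisets is exactly what accounts for the phrase ``up to isomorphism of $\PP^1$'' in the statement.
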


\begin{example}[$n=2$] \label{ex:n2}
All pencils $\mathcal{L}$ are regular. There are two ${\rm GL}(2)$-orbits, given
by the rank of a matrix $X$ that spans  
$\,\mathcal{L}^\perp = \{X \in \SSS^2: {\rm trace}(AX) = {\rm trace}(BX) = 0\} $.
If $X$ has rank $2$ then ${\rm det}(\mathcal{L})$ has two distinct roots in $\PP^1$ and
the Segre symbol is
 $\sigma(\mathcal{L})=[1,1] $. If $X$ has rank $1$ then
it is a double root in $\PP^1$ and 
$\sigma(\mathcal{L})=[2] $.
\end{example}

We learned about Theorem  \ref{thm:WS} from an unpublished note
by Pieter Belmans, titled  {\em Segre symbols}, which credits
the 1883 PhD thesis of Corrado Segre. It appears in the
textbooks on algebraic geometry   by Dolgachev \cite[\S 8.6.1]{Dol}
and Hodge-Pedoe \cite[\S XIII.10]{HP}.
The idea goes back to at least the 1850s, in works of
Cayley  \cite{cayley}
and Sylvester \cite{sylvester}.
One aim of this article is to revisit this history.

We begin in Section \ref{sec2} with a linear algebra perspective on Theorem \ref{thm:WS},
with focus on normal forms for pencils.
We denote by $\mathcal{L}^{-1}$ the set of the
  inverses of all invertible matrices in $\mathcal{L}$. Since we
  exclude singular pencils, this set is nonempty. Its closure in
   $\PP(\SSS^n)$ is a projective curve, called the \textit{reciprocal curve} and  denoted 
    $\PP \mathcal{L}^{-1}$.
In Section \ref{sec3} we study the reciprocal curve $\PP \mathcal{L}^{-1}$ of
a  pencil $\mathcal{L} \in {\rm Gr}(2,\SSS^n)^{\rm reg}$.
This curve is parametrized by
the inverses of all invertible matrices in $\mathcal{L}$.
We prove that $\PP \mathcal{L}^{-1}$ is a rational normal curve.
We express its degree in terms of the Segre symbol $\sigma(\mathcal{L})$,
and we determine its prime ideal.

In Section \ref{sec4} we turn to maximum likelihood estimation 
for Gaussians. A linear Gaussian model is a set of multivariate Gaussian probability
distributions whose covariance or concentration matrices are linear combinations of some fixed symmetric
matrices. Hence, when restricting to two-dimensional models, a pencil $\mathcal{L}$ plays two different roles in statistics,
depending on whether it lives
in the space of concentration matrices (as in \cite{SU})
 or in the space of covariance matrices (as in \cite{CMR}).
 This yields two numerical invariants, the ML degree 
 ${\rm mld}(\mathcal{L})$  and the reciprocal ML degree
  ${\rm rmld}(\mathcal{L})$.
    We compute these in Theorem~\ref{thm:sec4main}.

In Section \ref{sec5} we study the constructible set defined by a fixed Segre symbol:
\begin{equation}
\label{eq:stratum}  {\rm Gr}_\sigma \,\, = \,\,
\bigl\{ \,\mathcal{L} \in {\rm Gr}(2,\SSS^n)^{\rm reg}\,\,: \,\, \sigma(\mathcal{L}) = \sigma \,\bigr\}. 
\end{equation}
Its closure $\overline{{\rm Gr}}_\sigma$ is a variety. We study these varieties and
their poset of inclusions, seen in Figure \ref{fig:segre34}.
 This extends the stratification of ${\rm Gr}(2,\RR^n)$ by matroids, see \cite{gelfand}.
 Indeed, if $\mathcal{L}$ consists of diagonal matrices then
the Segre symbol $\sigma(\mathcal{L})$ specifies
 the rank $2$ matroid of $\mathcal{L}$, up to permuting the ground set $\{1,2,\ldots,n\}$.

 \begin{example}[$n=3$] \label{ex:n3} There are
  five strata ${\rm Gr}_\sigma$ in
 the Grassmannian ${\rm Gr}(2,\SSS^3)$:
   $$
  \begin{small}
\begin{matrix} 
\hbox{symbol} & \! \!\! {\rm codim}\! \! & {\rm degrees} & P & Q &  \hbox{variety in $\PP^2$} \\
[1,1,1] & 0 &  (2,2,3) & ax^2{+}by^2{+}cz^2 & x^2{+}y^2{+}z^2  & \hbox{\em four reduced points} \\
[2,1] & 1 &  (2,1,2) & 2a xy {+} y^2 {+} bz^2 & 2xy+z^2  & \!\!\! \hbox{\em one double point, two others} \\
[\,3\,] & 2 &  (2 ,0,1)  &  2axz{+}ay^2{+}2yz & 2xz+y^2 & \hbox{\em one triple point, one other}\\
[(1,\!1),1] & 2 &  (1,1,1) & ax^2{+}ay^2{+}bz^2 & x^2{+}y^2{+}z^2 & \hbox{\em two double points} \\
[(2,1)] & 3 &  (1,0,0) &  2axy{+}y^2{+}az^2 & 2xy+z^2  & \hbox{\em quadruple point} \\
\end{matrix}
\end{small}
$$

 For each Segre symbol $\sigma$, we display ${\rm codim}({\rm Gr}_\sigma)$,
  the triple of degrees $\bigl({\rm deg}(\mathcal{L}^{-1}), {\rm mld}(\mathcal{L}),  {\rm rmld}(\mathcal{L})\bigr)$, the basis $\{P,Q\}$ from Section \ref{sec2}, and its variety in $\PP^2$.
Here, $x,y,z$ are coordinates on $\PP^2$, and
 $ a,b,c$ are distinct nonzero~reals.
 This accounts for all regular pencils. A pencil  is singular if $P$ and $Q$
share a linear factor. 
One such  $\mathcal{L}$ is spanned by $ xy$ and $xz$. This
defines a line and a point  in $\PP^2$. 
We conclude that  $ {\rm Gr}(2,\SSS^3)^{\rm sing}$ is an irreducible variety of  dimension~$4$. 
\end{example}

\section{Canonical Representatives}
\label{sec2}

We identify symmetric $n \times n$ matrices $A$ with quadratic forms 
${\bf x} A {\bf x}^T$ in unknowns ${\bf x} = (x_1,\ldots,x_n)$. We fix
the field to be $\CC$. The
$\binom{n+1}{2}$-dimensional vector space $\SSS^n $
is equipped with the trace inner product
 $\, (A , B ) \mapsto \mathrm{trace}(A B)$.
The group ${\rm GL}(n)$ acts on quadratic forms by linear changes of coordinates, via
${\bf x} \mapsto {\bf x} g$.
This corresponds to the  action of ${\rm GL}(n)$  on symmetric matrices by congruence:
$$ {\rm GL}(n) \times \SSS^n \,\rightarrow \,\SSS^n\,,\, \,\, (g,A ) \,\mapsto \,g A g^T . $$

Let $\mathcal{L} = \CC \{A,B\}$ be a regular pencil in ${\rm Gr}(2,\SSS^n)$,
with ${\rm det}(B) \not= 0$.  The polynomial ring
$\CC[\lambda]$ in one variable $\lambda$ is a principal ideal domain. The cokernel
of the matrix $A-\lambda B$ is a module over this PID. Consider its {\em elementary divisors}
\begin{equation}
\label{eq:elemdiv}
(\lambda - \alpha_{1})^{e_1}, \,(\lambda - \alpha_{2})^{e_2},\,\ldots\,,\, (\lambda - \alpha_{s})^{e_s}.
\end{equation}
Here $e_1,\ldots,e_s$ are positive integers whose sum equals $n$.
The list  (\ref{eq:elemdiv}) is unordered and its product is
${\rm det}(\mathcal{L}) =
\pm {\rm det}(A - \lambda B)$.
The complex numbers $\alpha_i$ are the {\em eigenvalues} of the pair $(A,B)$.
They form a multiset of cardinality $n$ in $\PP^1$.

Suppose there are $r$ distinct eigenvalues $\alpha_i$. We have $r \leq s \leq n$.
The exponents $e_i$ corresponding to one fixed eigenvalue form a partition.
This gives a multiset of $r$ partitions, with  $s$ parts in total,
where the sum of all parts is $n$. This multiset of partitions
is the Segre symbol $\sigma = \sigma(\mathcal{L})$. It is thus
visible in (\ref{eq:elemdiv}).
We now paraphrase Theorem~\ref{thm:WS} using the elementary divisors 
of the matrix $A - \lambda B$.

\begin{corollary}\label{transCoord}
Consider two quadrics ${\bf x} A {\bf x}^T$ and ${\bf x} B{\bf x}^T$ with ${\rm det}(B) \not= 0$.
There exists a change of coordinates ${\bf x} \mapsto {\bf x}g$ which transforms them to 
${\bf x} C {\bf x}^T$ and ${\bf x} D {\bf x}^T$ if and only if the matrices $A-\lambda B $ and $C - \lambda D$ have the same elementary divisors. 
\end{corollary}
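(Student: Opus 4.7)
My plan is to handle the two directions separately. The forward direction ($\Rightarrow$) is an immediate computation: if $C = gAg^T$ and $D = gBg^T$, then
\[ C - \lambda D \,=\, g(A - \lambda B) g^T, \]
and since $g$ and $g^T$ lie in $\mathrm{GL}_n(\CC) \subset \mathrm{GL}_n(\CC[\lambda])$, this is an equivalence of matrices over the PID $\CC[\lambda]$. Equivalence preserves the Smith normal form, so $A - \lambda B$ and $C - \lambda D$ share the same elementary divisors.

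For the reverse direction I will deduce the statement from Theorem~\ref{thm:WS}. First I observe that the list of monic elementary divisors $(\lambda - \alpha_i)^{e_i}$ of $A - \lambda B$ encodes both the Segre symbol of the pencil $\mathcal{L}_1 = \CC\{A,B\}$, by grouping the exponents $e_i$ according to coincident eigenvalues $\alpha_i$, and the polynomial $\det(A - \lambda B)$ up to a nonzero scalar. Hence matching elementary divisors force $\mathcal{L}_1$ and $\mathcal{L}_2 = \CC\{C,D\}$ to share the same Segre symbol, and they also force the multisets of roots in $\PP^1$ to coincide with the parametrization by $\lambda$ already fixed. By Theorem~\ref{thm:WS}, some $g_0 \in \mathrm{GL}(n)$ satisfies $g_0 \mathcal{L}_1 g_0^T = \mathcal{L}_2$.

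It remains to upgrade this set-level equality to a basis-level one: I need $gAg^T = C$ and $gBg^T = D$ for a single $g$. The candidate $g_0$ only guarantees
\[ g_0 A g_0^T \,=\, \alpha C + \beta D, \qquad g_0 B g_0^T \,=\, \gamma C + \delta D \]
for some $h = \begin{pmatrix} \alpha & \beta \\ \gamma & \delta \end{pmatrix} \in \mathrm{GL}(2)$, so one must kill this residual $\mathrm{PGL}(2)$ ambiguity. The cleanest route I would take is to appeal to the congruence canonical form for regular symmetric pencils, which is precisely the subject of Section~\ref{sec2}: both $A - \lambda B$ and $C - \lambda D$ are congruent over $\CC$ to the same representative pencil, uniquely determined by their shared elementary divisors. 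Composing the two congruences produces the required $g \in \mathrm{GL}(n)$.

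The main obstacle I foresee is exactly this rigidification step: extracting, from the elementary-divisor hypothesis, enough information to eliminate the $\mathrm{GL}(2)$ basis-change freedom permitted by Theorem~\ref{thm:WS}. Short of the normal-form argument, one could instead argue directly that the only $h \in \mathrm{GL}(2)$ for which $h \cdot (A - \lambda B)$ retains the elementary divisors of $A - \lambda B$ is $h$ fixing the parametrization, and then absorb such an $h$ into a redefinition of $g_0$; but this feels much more painful than invoking the normal form, so I would prefer the latter.
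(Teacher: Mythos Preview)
The paper gives no self-contained proof here; it simply cites Hodge--Pedoe \cite[Theorem~1, p.~278]{HP}. So there is no detailed argument in the paper to compare your proposal against.

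Your forward direction is correct and standard.

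For the reverse direction, your primary route has a circularity problem. You propose to ``appeal to the congruence canonical form for regular symmetric pencils, which is precisely the subject of Section~\ref{sec2}.'' But in the paper's logical order, the validity of that canonical form is \emph{deduced from} Corollary~\ref{transCoord}: immediately after defining $P$ and $Q$ in (\ref{eq:normalform}), the paper writes ``Hence, by Corollary~\ref{transCoord}, the pair $({\bf x}A{\bf x}^T, {\bf x}B{\bf x}^T)$ is isomorphic to $({\bf x}P{\bf x}^T, {\bf x}Q{\bf x}^T)$.'' Invoking the normal form to establish Corollary~\ref{transCoord} therefore assumes what you are trying to prove. If instead you mean to cite the canonical form as an independent classical result (Weierstrass, or Hodge--Pedoe), that result already \emph{is} Corollary~\ref{transCoord}, and your detour through Theorem~\ref{thm:WS} becomes unnecessary.

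You do correctly identify the genuine content separating Theorem~\ref{thm:WS} from Corollary~\ref{transCoord}: Theorem~\ref{thm:WS} only yields equality of pencils as subspaces, leaving a residual $\mathrm{GL}(2)$ ambiguity on ordered bases. Your ``painful'' alternative---showing that any $h \in \mathrm{GL}(2)$ preserving the elementary divisors lifts to a congruence by some $g_1 \in \mathrm{GL}(n)$---is exactly what would be needed to close this gap without circularity. It is not trivial: when $r$ is small (e.g.\ $r=1$ or $r=2$) the M\"obius transformations fixing the eigenvalue multiset form a positive-dimensional group, and verifying that each such $h$ is realized by a congruence essentially amounts to re-deriving the normal form by hand.
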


\begin{proof}
For a textbook proof of this classical fact see \cite[Theorem~1, p.~278]{HP}.
\end{proof}

Corollary \ref{transCoord} is used to construct a canonical form for pencils.
For $e \in \NN$ and $\alpha \in \CC$,  we define a pair of symmetric $e \times e$ matrices 
by filling their antidiagonals:
\begin{equation}
\label{eq:blocks}
\begin{small}
P_e(\alpha) \,\,= \,\,
\begin{pmatrix}
0 & 0 & \cdots & 0 & \alpha\\ 
0 & 0 & \cdots & \alpha & 1\\
\vdots & \vdots & \udots & \udots & \vdots\\
0 & \alpha & 1 & \vdots & 0\\
\alpha & 1 & \cdots & 0 & 0
\end{pmatrix}
\quad {\rm and} \quad
Q_e \,\,= \,\,
\begin{pmatrix}
0 & \cdots & 0 & 0 & 1 \\
0 & \cdots & 0 & 1 & 0 \\
0 & \cdots & 1 & 0 & 0 \\
\vdots & \udots & \vdots & \vdots & \vdots \\
1 & \cdots &  0 & 0 & 0
\end{pmatrix}.
\end{small}
\end{equation}
The $e \times e$ matrix $\,P_e(\alpha) - \lambda Q_e\,$ has only one
elementary divisor, namely $(\lambda- \alpha)^e$.

Let us now start with the list in (\ref{eq:elemdiv}).
For each elementary divisor $(\lambda-\alpha_i)^{e_i}$ we form the $e_i \times e_i$ matrices in
(\ref{eq:blocks}), and we aggregate these blocks as follows:
\begin{equation}
\label{eq:normalform}
\begin{small}
P \,\,= \,\,
\begin{pmatrix}
\,P_{e_1}(\alpha_{1}) \! \! & 0 & \cdots & 0\\
\,0 & \! \! P_{e_2}(\alpha_{2}) \! \! & \cdots & 0\\
\,\vdots & \vdots & \ddots & \vdots\\
\,0 & 0 & \cdots & \! P_{e_s}(\alpha_{s})\,
\end{pmatrix}
\quad {\rm and} \quad
Q \,\,= \,\,
\begin{pmatrix}
\,Q_{e_1}& 0 & \cdots & 0 \, \\
\,0 & Q_{e_2} & \cdots & 0 \,\\
\,\vdots & \vdots & \ddots & \vdots \, \\
\,0 & 0 & \cdots & Q_{e_s}\,
\end{pmatrix}.
\end{small}
\end{equation}
The matrices $A-\lambda B$ and $P - \lambda Q$ have the same elementary divisors. 
Hence, by Corollary \ref{transCoord}, the pair $({\bf x} A {\bf x}^T,{\bf x} B {\bf x}^T )$ 
is isomorphic to  $({\bf x} P {\bf x}^T, {\bf x} Q {\bf x}^T)$ under the action by 
${\rm GL}(n)$. As in Example \ref{ex:n3}, every
regular pencil $\mathcal{L} \in {\rm Gr}(2,\SSS^n)$ has a 
normal form $\CC \{ P,Q\}$, where the matrices $P$ and $Q$  are
defined by the unordered list (\ref{eq:elemdiv}).
Given any Segre symbol $\sigma$, its canonical representative is
$\mathcal{L} = \CC \{P,Q\}$ where $\alpha_1,\ldots,\alpha_r$ are parameters.
 In what follows,  we often use index-free notation for unknowns, like
  ${\bf x} = (x,y,z)$ and $(\alpha_1,\alpha_2,\alpha_3) = (a,b,c)$.

\begin{example}[$n=5$]  \label{ex:212} Let  $\sigma = [(2,1),2]$. The 
list of elementary divisors~equals
$$ (\lambda-a)^2, \,(\lambda-a), \,(\lambda-b)^2. $$
Our canonical representative (\ref{eq:normalform})  for this
class of pencils $\mathcal{L}$ is the matrix pair
$$ \begin{small}
P \,\,= \,\,
\begin{pmatrix}
0 & a & 0 & 0 & 0\\
a & 1 & 0 & 0 & 0\\
0 & 0 & a & 0 & 0\\
0 & 0 & 0 & 0 & b\\
0 & 0 & 0 & b & 1
\end{pmatrix}
\quad  {\rm and} \quad
Q \,\,= \,\,
\begin{pmatrix}
0 & 1 & 0 & 0 & 0\\
1 & 0 & 0 & 0 & 0\\
0 & 0 & 1 & 0 & 0\\
0 & 0 & 0 & 0 & 1\\
0 & 0 & 0 & 1 & 0
\end{pmatrix}.
\end{small}
$$
The quadrics
$\,P = 2a xy + y^2 + az^2 + 2buv + v^2\,$ and $\,Q = 2xy + z^2 + 2 uv \,$
define a degenerate del Pezzo surface of degree four in $\PP^4$.
This surface has two singular points, $(0:0:0:1:0)$ and $(1:0:0:0:0)$; their multiplicities
are one and~three.
\end{example}

\begin{remark}
To appreciate Theorem~\ref{thm:WS}  and Corollary \ref{transCoord},
it helps to distinguish the two geometric figures associated with a pencil of quadrics,
and how the groups ${\rm GL}(2)$ and ${\rm GL}(n)$ act on these. First, there
is the configuration of $n$ points in $\PP^1$ defined by ${\rm det}(\mathcal{L})$.
This configuration undergoes projective transformations via ${\rm GL}(2)$ but it 
is left invariant by ${\rm GL}(n)$. Second, there is the codimension $2$ variety
in $\PP^{n-1}$ defined by the intersection of the two quadrics in $\mathcal{L}$. This variety
undergoes projective transformations via ${\rm GL}(n)$ but it is left invariant
by ${\rm GL}(2)$.  Hence, combining Theorem~\ref{thm:WS}  and Corollary \ref{transCoord}, we want these two geometric figures to be invariant when looking at isomorphic pencils, and this is possible by acting 
 on pencils with the two groups GL$(2)$ and GL$(n)$.
\end{remark}

In this section, pencils $\mathcal{L} = \CC\{A,B\}$ are studied by
linear algebra over a PID. We use
 the relationship between elementary divisors
and invariant factors.
One can compute these with the {\em Smith normal form} algorithm
over $\CC[\lambda]$.
We apply this to a specific torsion module,
namely the cokernel of our  matrix $A -\lambda B$.

Fix $n$ and a Segre symbol $\sigma = [ \sigma_1,\ldots,\sigma_r]$, where each entry is now
 a weakly decreasing vector $\sigma_i = (\sigma_{i1}, \sigma_{i2}, \ldots, \sigma_{in})$
of nonnegative integers. With this convention, the Segre symbol
$\sigma = [\sigma_1,\sigma_2]$  in Example \ref{ex:212}, with $n=5,s=3,r=2$, has 
$\sigma_1 = (2,1,0)$ and $\sigma_2 = (2,0,0)$.
Write $\alpha_1,\ldots,\alpha_r \in \CC$ for the distinct roots of ${\rm det}(A -\lambda B)$.
Then the elementary divisors are $(\lambda - \alpha_i)^{\sigma_{ij}}$ for $i=1,\ldots,r$ and $j=1,\ldots,n$.
Only $s$ of these  are different from $1$. The invariant factors are
$$ d_j \,\,\, := \,\,\, \prod_{i=1}^r (\lambda-\alpha_i)^{\sigma_{ij}} \quad {\rm  for}  \,\,\, j=1,\ldots,n. $$
Note that $\,d_n \,|\, d_{n-1}\, |\, \cdots \,|\, d_2 \,|\, d_1$.
The number of nontrivial invariant factors is the maximum number of
parts among the $r$ partitions $\sigma_i$.
For instance, in Example \ref{ex:212},  the invariant factors are
$\,d_1 = (\lambda - a)^2 (\lambda -b )^2 , \,d_2 = \lambda -a , \,d_3 = d_4 = d_5 = 1$.

The ideal of $k \times k$ minors of 
$A - \lambda B$ is generated by the greatest common divisor $D_k$ of these minors. 
The theory of modules over a PID tells us that
\begin{equation}
\label{eq:minors} D_k \quad := \quad
 \prod_{j=1}^{k} d_{n+1-j}  \quad = \quad \prod_{i=1}^r ( \lambda - \alpha_i)^{\sigma_{i,n-k+1} + \cdots + 
\sigma_{i,n-1} + \sigma_{i,n}}. 
\end{equation}

The Segre symbol  of  a  pencil
$\mathcal{L} = \CC \{A,B\}$ is determined by the
 ideal of $k \times k$ minors of $A-\lambda B$ for $k=1,\ldots,n$.
 In practice, we use the Smith normal form of $A-\lambda B$.
In the Introduction we proposed a different method, namely 
 the Jordan canonical form of $AB^{-1}$.
This computation uses only linear algebra over $\CC$,
unlike the Smith normal form.
To see that the Jordan canonical form of $AB^{-1}$ reveals
the Segre symbol, consider the transformation from $(A,B)$
to $(P,Q)$ in  Corollary \ref{transCoord}. This 
preserves the conjugacy class of $AB^{-1}$. Therefore,
$AB^{-1}$ and $PQ^{-1}$ have the same Jordan canonical form.
We  see in (\ref{eq:normalform}) that $Q$ is a permutation matrix,
and hence so is $Q^{-1}$. Furthermore, $P$ is already in Jordan
canonical form, after permuting rows and columns, and $\sigma$ is clearly visible in~$P$.

\section{The Reciprocal Curve}
\label{sec3}

For any regular pencil $\mathcal{L}$, we are interested in
the reciprocal curve $\PP \mathcal{L}^{-1}$.
We write ${\rm deg}(\mathcal{L}^{-1})$ for the degree of this curve in
$\PP(\SSS^n) $.
In Example \ref{ex:n3}, we have ${\rm deg}(\mathcal{L}^{-1}) = 2$ in  three cases, so
$\PP \mathcal{L}^{-1}$ is a plane conic. In the other two cases,
$\PP \mathcal{L}^{-1}$ is a line in $\PP^5$.
Here are the homogeneous prime ideals of these~curves:
$$
\begin{small}
\begin{matrix} 
\hbox{Segre symbol}  &  \hbox{Ideal of the reciprocal curve $\PP \mathcal{L}^{-1} $} & {\rm mingens} \\
[1,1,1] & \langle   x_{12}, x_{13}, x_{23}, (c{-}b) x_{11} x_{22} + (a{-}c) x_{11} x_{33} + (b{-}a) x_{22} x_{33} \rangle  
& (3,1)  \\
[2,1] & \langle  \,x_{13}\,, \,x_{22}\,, \,x_{23}\,,\, x_{12}^2 + (c-a) x_{11} x_{33} - 2 x_{12} x_{33}  \,\rangle & (3,1)  \\
[\,3\,]  & \langle \,  x_{23}\,, \,x_{33}\,, \,x_{13} - 2 x_{22}\,,\, x_{12}^2 - x_{11} x_{22}  \,\rangle & (3,1)   \\
[(1,1),1] &  \langle  \, x_{12}\,, \,x_{13}\,,\, x_{23}\,, \,x_{11} - x_{22}\,  \rangle & (4,0)   \\
[(2,1)] & \langle  \, x_{13}\,,\, x_{22}\,, \, x_{23}\,,\, x_{12} - 2 x_{33}  \,\rangle & (4,0)  \\
\end{matrix}
\end{small}
 $$
 The column ``mingens'' gives the numbers of linear and quadratic generators.

\begin{example}[$n=4$] \label{ex:n4}
Two quadrics $P$ and $Q$ in $\PP^3$
 meet in a quartic curve.
There are $13$ cases, one for each Segre symbol.
Here, $x,y,z,u$ are coordinates on~$\PP^3$.
$$
\begin{small}
\begin{matrix} 
\hbox{symbol} & \!\!\!\!\! \!\! {\rm codims} \!\!\!\!\! &\!  {\rm degrees}& \!\! \!{\rm mingens}\!\!\! \!& 
\!\! {\rm quadrics} \,\, P , Q
 & \hbox{variety in $\PP^3$} 
\medskip \\
[1,1,1,1]  & 0,0,0 & ({\bf 3},3,5) & (6,3) & {ax^2+by^2+cz^2+du^2  \atop x^2+y^2+z^2+u^2 } & \hbox{\em elliptic curve} \smallskip \\  
[2,1,1] & 1,1, 1 & ({\bf 3},2,4)  &  (6,3) & {2axy+y^2+cz^2+du^2 \atop 2xy+z^2 +u^2} & \hbox{\em nodal curve} \smallskip  \\
[(1,\!1),1,1] & 3,2,2 & ({\bf 2},2,3) & (7,1) & {a(x^2+y^2)+cz^2+du^2 \atop x^2+y^2+z^2+u^2 } &
 \hbox{\em two conics meet twice} \smallskip \\
[3,1] & 2,2,2 & ({\bf 3},1,3) &  (6,3) & {2axz+ay^2+2yz+du^2 \atop 2xz+y^2+u^2}& \hbox{\em cuspidal curve} \smallskip  \\
[2,2] & 2,2, 2 &  ({\bf 3},1,3) & (6,3) &  { 2axy+y^2+2bzu+u^2 \atop 2xy+2zu}& \! \hbox{\em twisted cubic with secant} \smallskip \\
[(2,1),1] & 4,3,3 & ({\bf 2},1,2)  &  (7,1) & { 2axy+y^2+az^2+du^2 \atop 2xy+z^2+u^2} & \hbox{\em two tangent conics} \smallskip\\
[4] &3,3, 3 & ({\bf 3},0,2) &  (6,3) & { 2axu+2ayz+2yu+z^2 \atop 2xu+2yz }& \!\!\! \hbox{\em twisted cubic with tangent} \smallskip \\
[2,(1,1)] & 4,3,3 & ({\bf 2},1,2) & (7,1) & { 2axy+y^2+c(z^2+u^2) \atop 2xy+z^2+u^2} & \hbox{\em conic meets two lines} \smallskip \\
[(3,1)] & 5,4,4 & ({\bf 2},0,1)  &  (7,1) & { 2axz+ay^2+2yz+au^2 \atop 2xz+y^2+u^2}& \!\! \! \hbox{\em conic and two lines concur} \smallskip \\
\!\! [(1,\! 1),(1,\! 1)] \!\!&6,4, 4 &({\bf 1},1,1) &  (8,0) & { a(x^2+y^2)+c(z^2+u^2) \atop x^2+y^2+z^2+u^2 } & \hbox{\em quadrangle of lines} \smallskip  \\
[(1,1,1),1] & 8,5,5 & ({\bf 1},1,1) &  (8,0) & { a(x^2+y^2+z^2)+du^2 \atop x^2+y^2+z^2+u^2}& \hbox{\em double conic} \smallskip  \\
[(2,2)] & 7,5,5 & ({\bf 1},0,0) & (8,0) & { 2axy+y^2+2azu+u^2 \atop 2xy+2zu} & \hbox{\em double line and two lines} \smallskip\\
[(2,1,1)] & 9,6,6 & ({\bf 1},0,0)  &  (8,0) & { 2axy+y^2+a(z^2+u^2) \atop 2xy+z^2+u^2} & \hbox{\em two double lines} \smallskip \\
 \end{matrix}
 \end{small}
$$
We see that $\PP \mathcal{L}^{-1} \subset \PP^9$ is either
a line, a plane conic, or a twisted cubic~curve.
 This is explained by the next theorem, which is our main result in Section~\ref{sec3}.
\end{example}

\begin{theorem} \label{thm:eight}
Let $\mathcal{L}$ be a regular pencil in $\SSS^n$ with Segre
symbol $\sigma = [\sigma_1,\ldots, \sigma_r]$.
Then $\PP \mathcal{L}^{-1}$ 
 is a rational normal curve of degree $d$ in $\PP (\SSS^n)$,
 where $d  = \sum_{i=1}^r \sigma_{i1} -1$ is one less than the
 sum of the first parts of the partitions in $\sigma $.
 The ideal of $\PP \mathcal{L}^{-1}$
 is generated by $\binom{n+1}{2} -  d - 1$ linear forms
 and $\binom{d}{2}$ quadrics in $\binom{n+1}{2}$ unknowns.
\end{theorem}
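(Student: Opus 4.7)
The plan is to reduce to the canonical form of Section~\ref{sec2} and then exhibit $\PP\mathcal{L}^{-1}$ as the image of an explicit morphism from $\PP^1$, whose defining polynomials can be analyzed degree by degree. The starting point is ${\rm GL}(n)$-equivariance: the congruence action on $\SSS^n$ commutes with matrix inversion (up to replacing $g$ by $g^{-T}$), and every quantity in the statement is invariant under projective transformations of $\PP(\SSS^n)$. Thus I may replace $\mathcal{L}$ by its canonical representative $\CC\{P,Q\}$ from (\ref{eq:normalform}) and work with those matrices throughout.

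The main explicit calculation is block-by-block. I would write $sP_e(\alpha)+tQ_e = \bigl[(s\alpha+t)I+sN^T\bigr]Q_e$ with $N^T$ the strictly lower-triangular nilpotent Jordan block. The Neumann series $\bigl[(s\alpha+t)I+sN^T\bigr]^{-1} = \sum_{k=0}^{e-1}(-s)^k(s\alpha+t)^{-k-1}(N^T)^k$ terminates, yielding both $\det(sP_e(\alpha)+tQ_e)=\pm(s\alpha+t)^e$ and the description of ${\rm adj}(sP_e(\alpha)+tQ_e)$ whose nonzero entries are precisely the monomials $\pm s^k(s\alpha+t)^{e-1-k}$ for $k=0,\ldots,e-1$, one appearing on each anti-subdiagonal of the block. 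Assembling the blocks produces ${\rm adj}(sP+tQ)$ with nonzero entries of degree $n-1$ in $(s,t)$, and formula (\ref{eq:minors}) identifies their gcd as $D_{n-1}$ of degree $n-d-1$. Dividing it out, every nonzero entry of $(sP+tQ)^{-1}$ becomes, up to a scalar, a binary form of degree exactly $d$ in $s,t$.

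Next I would observe that the morphism $\phi:\PP^1\to\PP(\SSS^n)$, $[s:t]\mapsto[(sP+tQ)^{-1}]$, is injective: if $(sP+tQ)^{-1}$ and $(s'P+t'Q)^{-1}$ are proportional, then so are $sP+tQ$ and $s'P+t'Q$, forcing $[s:t]=[s':t']$ by the linear independence of $P$ and $Q$. Hence $\phi$ is birational onto $\PP\mathcal{L}^{-1}$, so this image is a rational curve, and its degree equals that of the defining forms, namely $d$, provided these forms span the maximal $(d+1)$-dimensional subspace of binary forms of degree $d$.

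The main obstacle is this last linear-independence check. Inspection of the block entries after cancelling $D_{n-1}$ shows that, up to scalar, every nonzero entry of $(sP+tQ)^{-1}$ equals one of the polynomials
\begin{equation*}
f_{i,k} \,=\, (-s)^k\,(s\alpha_i+t)^{\sigma_{i1}-1-k}\prod_{i'\neq i}(s\alpha_{i'}+t)^{\sigma_{i'1}}, \qquad i=1,\ldots,r,\ \ k=0,\ldots,\sigma_{i1}-1,
\end{equation*}
each contributed by the largest block for its eigenvalue $\alpha_i$; there are exactly $\sum_i\sigma_{i1}=d+1$ of them. To prove they are linearly independent, I would reduce any relation modulo $(s\alpha_{i_0}+t)^{\sigma_{i_01}}$ for fixed $i_0$: the factor $(s\alpha_{i_0}+t)^{\sigma_{i_01}}$ inside $f_{i,k}$ for $i\neq i_0$ kills those terms, and dividing the surviving $f_{i_0,k}$ by the nonvanishing $\prod_{i'\neq i_0}(s\alpha_{i'}+t)^{\sigma_{i'1}}$ reduces the relation to a dependence among the manifestly independent monomials $s^k(s\alpha_{i_0}+t)^{\sigma_{i_01}-1-k}$, forcing all coefficients to vanish. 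Hence $\PP\mathcal{L}^{-1}$ is a non-degenerate curve of degree $d$ inside some $\PP^d\subset\PP(\SSS^n)$, and therefore a rational normal curve. Its ideal inside this $\PP^d$ is generated by the $\binom{d}{2}$ quadratic $2\times 2$-minors of a catalecticant matrix, while the ambient embedding in $\PP(\SSS^n)\simeq\PP^{\binom{n+1}{2}-1}$ contributes the remaining $\binom{n+1}{2}-d-1$ linear forms.
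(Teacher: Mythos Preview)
Your proposal is correct and follows essentially the same route as the paper: reduce to the canonical pair $(P,Q)$, invert block by block, identify the distinct nonzero entries of $(P-\lambda Q)^{-1}$ as the $d+1$ functions $(\lambda-\alpha_i)^{-k}$ with $1\le k\le\sigma_{i1}$, and prove these are linearly independent so that the map to $\PP(\SSS^n)$ is the $d$-uple Veronese in disguise. Your use of the homogeneous parameter $[s:t]$ and the Neumann series for the block inverse are cosmetic variants of the paper's affine parameter $\lambda$ and its explicit entry formula~(\ref{eq:inverseentries}); your linear-independence check by reducing modulo $(s\alpha_{i_0}+t)^{\sigma_{i_01}}$ is exactly the paper's Lemma~\ref{lem:negativepowers} rewritten in polynomial rather than rational-function language. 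The injectivity argument you include is harmless but redundant: once you know the entries span all binary $d$-forms, the map is literally the rational normal curve up to a linear change of coordinates on the target, and the ideal description follows.
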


\begin{proof}
The curve $\PP \mathcal{L}^{-1}$ is parametrized by  $\binom{n+1}{2}$
rational functions in one unknown $\lambda$, namely the
entries in the inverse of matrix $P - \lambda Q$ in Section~\ref{sec2}. 
We scale each entry by $D_n = \pm{\rm det}(P - \lambda Q)$ to get a
polynomial parametrization by the adjoint of $P- \lambda Q$. This is
an $ n \times n$ matrix 
whose entries are the $(n{-}1) \times (n{-}1)$ minors of $P-\lambda Q$. These
are polynomials of degree $\leq n-1$ in~$\lambda$, which are
divisible by the invariant factor $D_{n-1}$. Note that $D_{n-1}$ has
degree $\sum_{i=1}^r \sum_{j=2}^n \sigma_{ij}$ in $\lambda$.
 Subtracting this
 from the expected degree $n-1$,  we obtain 
 $d  = \sum_{i=1}^r \sigma_{i1} -1$.
 We remove the factor $D_{n-1}$ from each entry of
 the adjoint. The resulting matrix $(D_n/D_{n-1})\cdot (P-\lambda Q)^{-1}$ 
 also parametrizes $\PP \mathcal{L}^{-1}$.
 The entries of that matrix  are polynomials in $\lambda$ of degree $\leq d$.
 As a key step, we will show that these span
 the $(d+1)$-dimensional space $\CC[\lambda]_{\leq d}$ of all polynomials in $\lambda$ 
  of degree $\leq d$.


The inverse of $P-\lambda Q$ is a block matrix, where the blocks are
the inverses~of the $e \times e$ matrices $P_e(\alpha) - \lambda Q_e$ in (\ref{eq:blocks}),
one for each elementary divisor. A  computation shows that  the entry of 
$(P_e(\alpha) - \lambda Q_e)^{-1}$ in row $i$ and column $j$~is
\begin{equation}
\label{eq:inverseentries}
  -(\lambda - \alpha )^{i+j-e-2}  \quad {\rm if}\,\,\, i+j \leq e+1 
\qquad {\rm and} \qquad
0  \quad {\rm  if}\,\,\, i+j \geq e+2.
\end{equation}
It follows that the distinct nonzero entries in the $n \times n$ matrix
$(P-\lambda Q)^{-1}$ are
\begin{equation}
\label{eq:wantLI} \pm (\lambda - \alpha_i)^{-k}  \quad
\hbox{where $\,1 \leq k \leq \sigma_{i1}\,$ and $\,1 \leq i \leq r$.}
\end{equation}
The common denominator of these $d+1 = \sum_{i=1}^r \sigma_{i1}$
rational functions in $\lambda$ is equal to
$\,D_n/D_{n-1} = \prod_{i=1}(\lambda - \alpha_i)^{\sigma_{i1}} $.
Multiplying  by that common denominator, we
obtain $d+1$  polynomials in $\lambda$ of degree $\leq d$.
Lemma~\ref{lem:negativepowers}  below tells us
that these polynomials are linearly independent. Hence they span 
 $\CC[\lambda]_{\leq d} \simeq \CC^{d+1}$.

The proof  of Theorem \ref{thm:eight} now concludes as follows. 
By recording which entries of $(P-\lambda Q)^{-1}$ are zero,
and which pairs of entries are equal, we obtain
$\binom{n+1}{2} -d-1$  independent linear forms that vanish on 
$\PP \mathcal{L}^{-1}$. We know that there exist
linear forms $u_i$ in the matrix entries which
evaluate to $\lambda^i$ for $i=0,1,2,\ldots,d$.
The $\binom{d}{2}$ quadrics that vanish on $\PP \mathcal{L}^{-1}$
are the $2 \times 2$ minors~of the $2 \times d$ matrix
\begin{equation} \label{eq:2byd} \begin{small}
\begin{pmatrix} 
u_0 & u_1 & u_2 & \cdots & u_{d-1} \\
u_1 & u_2 & u_3 & \cdots & u_d \end{pmatrix} . \end{small}
\end{equation}
We have thus constructed an  isomorphism between our curve
 $\PP \mathcal{L}^{-1} $ and the rational normal curve $
\{ ( 1 : \lambda :  \cdots : \lambda^d) \}$,
whose prime ideal is given by (\ref{eq:2byd}).
\end{proof}
Notice that the final part of the proof gives an algorithm for computing generators
of the homogeneous prime ideal that defines the reciprocal curve.

\begin{lemm} \label{lem:negativepowers}
A finite set of distinct rational functions $(\lambda - \alpha_j)^{-s_{ij}}$,
each a negative power of one of the expressions
$\lambda- \alpha_1,\ldots, \lambda - \alpha_r$, is linearly independent.
\end{lemm}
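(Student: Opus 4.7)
The plan is to extract leading polar parts, which is the computational content of the uniqueness of the partial fraction decomposition over $\CC$. Suppose for contradiction that there is a nontrivial linear relation
\begin{equation*}
\sum_{(i,j)} c_{ij}\,(\lambda - \alpha_j)^{-s_{ij}} \,=\, 0
\end{equation*}
among the listed functions, with at least one $c_{ij}\neq 0$. The first step is to pick an index $j_0$ for which some $c_{ij_0}$ is nonzero and let $m$ denote the largest value of $s_{ij_0}$ attained by such a nonvanishing coefficient. Because the functions in the list are pairwise distinct, this maximum is realized by exactly one term, which I denote $c\,(\lambda-\alpha_{j_0})^{-m}$ with $c\neq 0$.

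The second step is to multiply the identity through by $(\lambda-\alpha_{j_0})^m$ and evaluate the result at $\lambda = \alpha_{j_0}$. The distinguished term contributes the nonzero constant $c$. Each remaining term with $j = j_0$ acquires a factor $(\lambda-\alpha_{j_0})^{m-s_{ij_0}}$ with $m - s_{ij_0} \geq 1$, so it vanishes at $\alpha_{j_0}$. Each term with $j\neq j_0$ becomes $c_{ij}(\lambda-\alpha_{j_0})^m(\lambda-\alpha_j)^{-s_{ij}}$, which is regular at $\alpha_{j_0}$ because $\alpha_j \neq \alpha_{j_0}$, and vanishes there thanks to the factor $(\lambda-\alpha_{j_0})^m$ with $m\geq 1$. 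Evaluating at $\alpha_{j_0}$ therefore forces $c = 0$, contradicting the choice of $m$.

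There is essentially no obstacle here: the whole argument is a one-line use of pole orders, and could also be packaged as a direct appeal to the uniqueness of the partial fraction expansion. The only point worth noting is that distinct pairs $(j, s_{ij})$ automatically yield distinct rational functions, since a negative power $(\lambda-\alpha_j)^{-s_{ij}}$ has a pole only at $\alpha_j$ and its order there equals $s_{ij}$; thus the hypothesis that the listed functions are distinct serves only to make the maximum $m$ uniquely determined at the start.
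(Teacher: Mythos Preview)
Your proof is correct and uses essentially the same mechanism as the paper: multiply through by the highest relevant power of $(\lambda-\alpha_{j_0})$ and evaluate at $\lambda=\alpha_{j_0}$ to isolate a single surviving coefficient. The paper organizes this as an induction on $r$, peeling off all terms attached to one root before invoking the inductive hypothesis, whereas you head straight for a contradiction with a single such evaluation; the underlying step is identical.
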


\begin{proof}
We use induction on  $r$. The base case is $r=1$.
We claim that $(\lambda-\alpha)^{-s_1}, \,\ldots ,$ $ (\lambda-\alpha)^{-s_n}$  are 
linearly independent when $0<s_1< \cdots <s_n$. Suppose 
$$ k_1(\lambda-\alpha)^{-s_1}+ \,\cdots \,+k_n(\lambda-\alpha)^{-s_n}\,\,=\,\,0
\qquad \hbox{for some} \,\,\, k_1, \ldots ,k_n \in \mathbb{C}. $$
Clearing denominators, we obtain
$k_1(\lambda-\alpha)^{s_n-s_1}+ \cdots +k_n=0$.
Setting $\lambda=\alpha$ we find $k_n=0$. Repeating this computation $n$ times,
we conclude  $k_1=k_2= \cdots =k_n=0$.

For the induction step from $r-1$ to $r$, we 
consider  distinct negative powers
\begin{equation}
\label{eq:rrows}
\begin{matrix}
(\lambda-\alpha_1)^{-s_{1,1}}, & (\lambda-\alpha_1)^{-s_{1,2}}, & \!\! \dots \,\, , & (\lambda-\alpha_1)^{-s_{1,n_1}}, \\
\vdots & \vdots & & \vdots \\
(\lambda-\alpha_r)^{-s_{r,1}}, & (\lambda-\alpha_r)^{-s_{r,2}}, & \!\! \dots \,\, , & (\lambda-\alpha_r)^{-s_{r,n_r}},
\end{matrix}
\end{equation}
where $0\leq s_{i,j} <s_{i,j+1}$ for  $i=1,...,r$ and $j=1,...,n_i$.
 Consider a linear combination of (\ref{eq:rrows}) with coefficients
 $k_{1,1},\ldots,k_{r,n_r} $.
  Multiplying by $(\lambda- \alpha_r)^{s_{r,n_r}}$ and setting $\lambda = \alpha_r$,
we find $k_{r,n_r} = 0$.  Repeating with 
$(\lambda- \alpha_r)^{s_{r,i}}$ for $i=n_r {-}1,n_r {-}2,\ldots,1$, we get
$k_{r,1} = \cdots = k_{r,n_r} = 0$. By the induction hypothesis, 
the first $r-1$ rows of (\ref{eq:rrows}) are linearly independent.
This proves that all $k_{i,j}$ are zero. Lemma \ref{lem:negativepowers} follows.
\end{proof}

The last paragraph in the proof of Theorem  \ref{thm:eight}
gives an algorithm for computing generators of the ideal of
$\PP \mathcal{L}^{-1}$. We show this for our running example.

\begin{example} \label{ex:212ideal}
Let $\sigma = [(2,1),2]$ as in Example \ref{ex:212}.
We have $d = \sigma_{11} + \sigma_{21} -1 = 3$,
so $\PP \mathcal{L}^{-1}$ is a twisted cubic curve in $\PP^{14}$.
The inverse of $P- \lambda Q$ satisfies the $\binom{6}{2} -3-1 = 11$ linear forms
$\,x_{13}, x_{14}, x_{15}, x_{22}, x_{23}, x_{24}, x_{25}, x_{34}, x_{35}, x_{55} ,\,
x_{12} - x_{33}$. The quadratic ideal generators are 
$u_0 u_2 - u_1^2$, $u_0 u_3 - u_1 u_2$ and $u_1 u_3- u_2^2$, where
 $$  \begin{small} \begin{matrix}
u_0 &= &  (a-b)  x_{11}\,-\,2  x_{12}\,+\,(a-b)  x_{44}\,+\,2  x_{45}\,, \\
u_1 & = &  (a^2-a b)  x_{11}-(a+b)  x_{12}+(a b-b^2)  x_{44}+(a+b)  x_{45}\,,  \\
u_2 & = &  (a^3-a^2 b)  x_{11}\, -\, 2 a b  x_{12}\,+\,(a b^2-b^3)  x_{44} \,+\,2 a b  x_{45}\,, \\
u_3 & = &  (a^4-a^3 b)  x_{11}+(a^3-3 a^2 b)  x_{12} + (a b^3-b^4)  x_{44} + (3 a b^2-b^3)  x_{45}.
\end{matrix} \end{small}
$$
Note that
$\,x_{11} = -(\lambda - a)^{-2} $,
$x_{12} = (\lambda-a)^{-1} $,
$x_{44} = -(\lambda-b)^{-2} $,
$x_{45} = (\lambda-b)^{-1} $.
\end{example}
 
\section{Maximum Likelihood Degrees}
\label{sec4}

Let $ \SSS^n_{\succ 0}$ denote the open convex cone of
positive definite real symmetric $n \times n$ matrices.
For any fixed $S \in \SSS^n$, 
we consider the following {\em log-likelihood function}:
\begin{equation}
\label{eq:loglikelihood}
 \ell_S \,:\,\SSS^n_{\succ 0}\, \rightarrow\, \RR \, , \,\,
M \,\mapsto \,{\rm log}({\rm det}(M)) - {\rm trace}(S M). 
\end{equation}
We seek to compute the critical points of $\ell_S$ restricted
to a smooth subvariety of $\SSS^n$. 
Here, by a {\em critical point} we mean a nonsingular matrix $M$
in the subvariety whose normal space contains the gradient vector of $\ell_S$ at $M$.
This is an algebraic problem because
the $\binom{n+1}{2}$ partial derivatives of $\ell_S$ are rational functions. 

The determinant and the trace of a square matrix are invariant
under conjugation. This implies the following identity for all
invertible $n \times n$ matrices~$g$:
\begin{equation}
\label{eq:invariance}
 \ell_{g^{-1} S (g^{-1})^T} (g^T M g) \,\,= \,\,
{\rm log}({\rm det}(g^T M g)) - {\rm trace}(g^{-1} S   M g)
\,\,=\,\, \ell_S(M) + {\rm const}. 
\end{equation}

Let $\mathcal{L}$ be a linear subspace  of $\SSS^n$,
and fix a generic matrix $S \in \SSS^n$.
The {\em ML degree} ${\rm mld}(\mathcal{L})$
   is the number of complex critical points
of $\ell_S$ on $\mathcal{L}$.
The {\em reciprocal ML degree} ${\rm rmld}(\mathcal{L})$
 of $\mathcal{L}$ is the number of complex critical points
of $\ell_S$ on $\mathcal{L}^{-1}$.
Both ML degrees do not depend on the choice of $S$, as long as
$S$ is generic.
The ML degrees are invariant 
under the action of ${\rm GL}(n)$  by congruence on $\SSS^n$:

\begin{lemm} \label{lem:invariance}
The ML degree and  the reciprocal ML degree of 
a subspace $\mathcal{L} \subset \SSS^n$ 
are determined by its congruence class. In particular, this holds for two-dimensional subspaces
$\mathcal{L}$, i.e.~for pencils of quadrics.
\end{lemm}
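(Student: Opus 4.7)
The plan is to exploit identity (\ref{eq:invariance}) to transport the likelihood problem along the congruence action. Given $h \in {\rm GL}(n)$, let $\mathcal{L}' := h \mathcal{L} h^T$ be an arbitrary representative of the congruence class of $\mathcal{L}$. The map $\phi_h : \SSS^n \to \SSS^n$, $M \mapsto h M h^T$, is a linear isomorphism that restricts to a bijection $\mathcal{L} \to \mathcal{L}'$. Since $(h M h^T)^{-1} = h^{-T} M^{-1} h^{-1}$, matrix inversion intertwines with the congruence action, so $\phi_{h^{-T}}$ restricts to a bijection $\mathcal{L}^{-1} \to (\mathcal{L}')^{-1}$.

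Next, rewriting (\ref{eq:invariance}) with $g := h^T$ yields
$$ \ell_{S'}\bigl(\phi_h(M)\bigr) \;=\; \ell_S(M) \,+\, c_h, $$
where $S' := h^{-T} S h^{-1}$ and $c_h$ is a constant independent of $M$. Adding a constant does not alter the critical locus, and $\phi_h$ is a linear isomorphism (so tangent and normal spaces transport linearly along it). Hence the complex critical points of $\ell_S$ restricted to $\mathcal{L}$ correspond bijectively via $\phi_h$ to the complex critical points of $\ell_{S'}$ restricted to $\mathcal{L}'$.

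Since the assignment $S \mapsto S' = h^{-T} S h^{-1}$ is a linear automorphism of $\SSS^n$, it carries Zariski-dense opens to Zariski-dense opens; in particular, a generic $S$ corresponds to a generic $S'$, and vice versa. Therefore ${\rm mld}(\mathcal{L}) = {\rm mld}(\mathcal{L}')$. Running the same argument with $\phi_{h^{-T}}$ in place of $\phi_h$, now applied to the bijection $\mathcal{L}^{-1} \to (\mathcal{L}')^{-1}$, gives ${\rm rmld}(\mathcal{L}) = {\rm rmld}(\mathcal{L}')$. The stated case for pencils is the specialization $\dim \mathcal{L} = 2$.

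The hard part will be essentially nothing, since (\ref{eq:invariance}) is set up precisely for this purpose. The only step requiring care is that the correspondence of critical points along $\phi_h$ is genuine; this is immediate because $\phi_h$ is a linear isomorphism of $\SSS^n$ identifying tangent and normal spaces along images, while the additive constant $c_h$ has zero gradient.
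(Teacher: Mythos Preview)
Your proof is correct and follows essentially the same approach as the paper's: both use identity (\ref{eq:invariance}) to transport $\ell_S$ along the congruence action, note that the transformed data matrix $S'$ is again generic, and conclude that critical-point counts match. Your treatment of the reciprocal case is slightly more explicit than the paper's---you spell out that $(h\mathcal{L}h^T)^{-1}=h^{-T}\mathcal{L}^{-1}h^{-1}$ and apply the argument with the group element $h^{-T}$---whereas the paper simply remarks that the argument works for any (nonlinear) variety such as $\mathcal{L}^{-1}$.
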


\begin{proof} Fix $g$ and $\mathcal{L}$.
If the matrix $S$ is  generic in $\SSS^n$ then so is  $g^{-1} S (g^{-1})^T$.
The image of $\mathcal{L}$ under congruence by $g^T$ consists of all
matrices $g^T M g$ where $M \in \mathcal{L}$.
By (\ref{eq:invariance}),
 the likelihood function of $S$ on $\mathcal{L}$ agrees
with that of $g^{-1} S (g^{-1})^T$ on $g^T \mathcal{L} g$, up to an additive constant.
The two functions have the same number of critical points, so
the subspaces $\mathcal{L}$ and $g^T \mathcal{L} g$ have the same ML degree.
The same argument works if $\mathcal{L}$ is replaced by any
nonlinear variety, such as  $\mathcal{L}^{-1}$.
\end{proof}

We now focus on  pencils ($m=2$), and we state
our main result in Section~\ref{sec4}.

\begin{theorem} \label{thm:sec4main}
Let $\mathcal{L}$ be a pencil with
Segre symbol $\sigma = [\sigma_1,\ldots,\sigma_r]$. Then
\begin{equation} \label{eq:mlformulas}
\!\!  {\rm mld}(\mathcal{L}) \,=\, r-1 \,\,\,\,\, {\rm and} \,\,\,\,\,
{\rm rmld} (\mathcal{L}) \,=\, \sum_{i=1}^r \sigma_{i1} + r - 3\,\,=\,\,
{\rm deg}(\mathcal{L}^{-1}) + {\rm mld}(\mathcal{L}) - 1 . 
\end{equation}
\end{theorem}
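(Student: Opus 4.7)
The plan is to reduce to the canonical form via the GL$(n)$-invariance of Lemma~\ref{lem:invariance} and then compute each ML degree by eliminating one variable from the resulting bivariate critical system. I would replace $\mathcal{L}$ by $\CC\{P, Q\}$ from Section~\ref{sec2}, parametrize a matrix $M \in \mathcal{L}$ as $M = t(P - \lambda Q)$ with $(t, \lambda) \in \CC^* \times \CC$, and in both cases solve $\partial_t \ell_S = 0$ for $t$ before substituting into $\partial_\lambda \ell_S = 0$. This produces a single rational equation in $\lambda$, whose number of solutions, after clearing denominators and tracking cancellations at infinity, will give the desired ML degree.

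For ${\rm mld}$, the log-likelihood on $\mathcal{L}$ reads $\ell_S = n \log t + \log \det(P - \lambda Q) - t(c_1 - c_2 \lambda)$ with $c_1 = {\rm trace}(SP)$ and $c_2 = {\rm trace}(SQ)$, and elimination of $t$ produces $f(\lambda)(c_1 - c_2 \lambda) = n c_2$, where $f(\lambda) := -\sum_i m_i/(\lambda - \alpha_i)$ and $m_i := \sum_j \sigma_{ij}$ is the algebraic multiplicity of $\alpha_i$. Clearing the denominator $\prod_i (\lambda - \alpha_i)$ yields a polynomial equation of nominal degree $r$; because $\sum_i m_i = n$ the leading coefficient cancels identically, while the sub-leading coefficient is a nontrivial expression in $c_1, c_2, \alpha_i, m_i$ that is generically nonzero. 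Hence the eliminant has exactly $r - 1$ roots.

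For ${\rm rmld}$, I introduce the trace $g(\lambda) := {\rm trace}(S(P - \lambda Q)^{-1})$. The entry formula (\ref{eq:inverseentries}) together with Lemma~\ref{lem:negativepowers} shows that $g$ is a linear combination of the $d + 1$ rational functions $(\lambda - \alpha_i)^{-k}$ for $1 \leq k \leq \sigma_{i1}$, whose coefficients are independent generic linear forms in the entries of $S$. Eliminating $t$ from the critical equations then gives $f(\lambda) g(\lambda) = n g'(\lambda)$; I multiply through by $D(\lambda) := \prod_i (\lambda - \alpha_i)^{\sigma_{i1}+1}$, of degree $d + 1 + r$, to clear all poles. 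The heart of the argument is a double cancellation at $\lambda = \infty$: both $fg$ and $n g'$ expand as $-n \left(\sum_i c_{i1}\right)/\lambda^2 + O(1/\lambda^3)$, so the $1/\lambda^2$ terms annihilate, and the next coefficient is generically nonzero. This leaves a polynomial of degree exactly $(d+1+r) - 3 = \sum_i \sigma_{i1} + r - 3$, which equals $\deg(\mathcal{L}^{-1}) + {\rm mld}(\mathcal{L}) - 1$ by Theorem~\ref{thm:eight}.

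The main obstacle will be the degree count for ${\rm rmld}$: one must verify the twofold cancellation at infinity via explicit partial-fraction bookkeeping, and argue that the resulting cubic-decay coefficient is a nonzero polynomial in the coefficients of $S$. This amounts to a careful Laurent expansion of $f$, $g$, and $g'$ around $\lambda = \infty$, combined with a genericity argument that the coefficients $c_{ik}$ of $g$ vary independently as $S$ ranges over $\SSS^n$, so that no additional collapse occurs either at infinity or at the finite poles $\alpha_i$.
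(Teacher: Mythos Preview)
Your approach is correct and reaches the same formulas, but it is organized differently from the paper's argument.

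The paper stays in the linear coordinates $(x,y)$ on $\mathcal{L}$, clears denominators in $\partial_x\ell_S$ and $\partial_y\ell_S$ to obtain two bivariate polynomials of degree $r$ (respectively $\varphi+r-1$, with $\varphi=\sum_i\sigma_{i1}$, in the reciprocal case), homogenizes to $\PP^2$, and applies B\'ezout's theorem. The ML degree is then $r^2$ minus the intersection multiplicities at the origin $[0{:}0{:}1]$ and along the line at infinity, yielding $r^2-(r-1)^2-r=r-1$; the reciprocal case gives $(\varphi+r-1)^2-(\varphi+r-2)^2-(\varphi+r)=\varphi+r-3$. This is the method of \cite{CMR}.

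You instead pass to the nonlinear chart $(t,\lambda)$ via $M=t(P-\lambda Q)$, which exploits the fact that $\ell_S$ splits as $n\log t$ plus a term linear in $t$. This lets you solve $\partial_t\ell_S=0$ for $t$ in closed form and substitute, reducing everything to a single rational equation in $\lambda$; the count then becomes a Laurent expansion at $\lambda=\infty$. The key identity $fg\sim ng'\sim -n(\sum_i c_{i1})\lambda^{-2}$ that produces the double cancellation in the reciprocal case is a nice structural observation with no direct analogue in the paper's B\'ezout bookkeeping.

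Your route is more elementary (no projective intersection theory) and arguably cleaner for ${\rm rmld}$, at the cost of two small checks you should make explicit: that the chart $(t,\lambda)$ misses only the line $x=0$, on which there are generically no critical points, and that for generic $S$ none of the roots of the eliminant fall at $\lambda=\alpha_i$ or at $g(\lambda)=0$. Both follow readily from the independence of the $c_{ik}$ furnished by Lemma~\ref{lem:negativepowers}, as you note.
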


For generic subspaces $\mathcal{L}$, with
Segre symbol $\sigma = [ 1, \ldots, 1]$, this implies
\begin{equation}
\label{eq:genericformulas}
 {\rm mld}(\mathcal{L}) \,=\, 
 {\rm deg}(\mathcal{L}^{-1}) \,=\,n-1 \qquad {\rm and} \qquad {\rm rmld}(\mathcal{L})\, =\, 2n-3. 
\end{equation}
The left formula in (\ref{eq:genericformulas}) appears in \cite[Section 2.2]{SU}. The
right formula in (\ref{eq:genericformulas}) is due to  Coons, Marigliano and Ruddy \cite{CMR}.
We here generalize these results to arbitrary
  pencils~$\mathcal{L}$. The proof  of 
  Theorem \ref{thm:sec4main} 
    appears at the end of this section.

\smallskip

The log-likelihood function (\ref{eq:loglikelihood}) is important
in statistics. 
The sample covariance matrix $S$
encodes data points in $\RR^n$. The matrix $M$ is the
concentration matrix. Its inverse $M^{-1}$ is the covariance matrix.
These represent Gaussian distributions on $\RR^n$.
The subspace $ \mathcal{L}$ encodes linear constraints, either on $ M$ or on $M^{-1}$.
For the former, we get the ML degree. For the latter, we get the reciprocal ML degree.
These degrees measure the algebraic complexity of  maximum likelihood estimation.
In the language in \cite{CMR,STZ},
${\rm mld}(\mathcal{L})$ refers to the
{\em linear concentration model,} while ${\rm rmld}(\mathcal{L})$ refers to the
{\em linear covariance~model}.

If $\mathcal{L}$ is a statistical model,
then it contains a positive definite matrix. In symbols,
$\mathcal{L} \cap \SSS^n_{\succ 0} \not= \emptyset$.
If  this holds and ${\rm dim}(\mathcal{L}) = 2$
then $\mathcal{L}$ is called a {\em $d$-pencil} \cite{uhlig79}.
Thus, our numbers
${\rm mld}(\mathcal{L})$ and ${\rm rmld}(\mathcal{L})$
are interesting for statistics when $\mathcal{L}$ is a
$d$-pencil.
Here, we can take advantage of the following  linear algebra fact.

\begin{lemm} \label{lem:a1a2}
Every $d$-pencil $\mathcal{L}$ can be simultaneously diagonalized over $\RR$.
After a  change of coordinates,  $\mathcal{L}$ is spanned by the  quadrics $\sum_{i=1}^n a_i x_i^2$ and
$ \sum_{i=1}^n x_i^2$.
\end{lemm}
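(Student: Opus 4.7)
The plan is to carry out the classical simultaneous diagonalization of a pair of real symmetric forms, one of which is definite. Since $\mathcal{L}$ is a $d$-pencil, by definition $\mathcal{L} \cap \SSS^n_{\succ 0} \neq \emptyset$, so we may pick a basis $\{A,B\}$ of $\mathcal{L}$ with $B$ positive definite. The whole statement is then invariant under the action of $\mathrm{GL}(n,\RR)$ by congruence (which preserves the pencil and is exactly the change-of-coordinates action in the lemma), so it suffices to produce a single $g \in \mathrm{GL}(n,\RR)$ such that $g^T B g$ and $g^T A g$ are both diagonal.

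First I would reduce $B$ to the identity. Because $B \in \SSS^n_{\succ 0}$, standard real linear algebra (Cholesky, or a positive-definite square root) provides $g_1 \in \mathrm{GL}(n,\RR)$ with $g_1^T B g_1 = I_n$. Apply this congruence to both generators: $B$ becomes $I_n$, and $A' := g_1^T A g_1$ is still real symmetric. Next I would diagonalize $A'$ using the real spectral theorem: there exists an orthogonal matrix $h \in \mathrm{O}(n)$ with $h^T A' h = \mathrm{diag}(a_1,\ldots,a_n)$ for some real numbers $a_i$. The crucial point is that an orthogonal $h$ preserves the identity: $h^T I_n h = I_n$. Hence, setting $g = g_1 h$, the congruence by $g$ sends $B$ to $I_n$ and $A$ to $\mathrm{diag}(a_1,\ldots,a_n)$ simultaneously.

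Translating back to quadratic forms via $\mathbf{x} \mapsto \mathbf{x} g$, as in the conventions of Section \ref{sec2}, the pencil $\mathcal{L}$ is spanned by $\mathbf{x}(g^T A g)\mathbf{x}^T = \sum_{i=1}^n a_i x_i^2$ and $\mathbf{x}(g^T B g)\mathbf{x}^T = \sum_{i=1}^n x_i^2$, which is the claimed normal form.

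There is no real obstacle here: the argument is a textbook application of the spectral theorem combined with the positive-definiteness of some element of $\mathcal{L}$. The only point worth flagging is that the argument is genuinely real, not complex, because we rely on orthogonal diagonalization over $\RR$; the $a_i$ are then automatically real, which is essential for the statement. One might also remark that the $a_i$ obtained this way are nothing but the real eigenvalues of $AB^{-1}$, consistent with the Segre-symbol picture from Section \ref{sec2}, although this is not needed for the proof itself.
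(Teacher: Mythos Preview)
Your argument is correct and entirely self-contained: pick a positive definite $B \in \mathcal{L}$, reduce it to the identity by a real congruence, then diagonalize the remaining generator by an orthogonal change of basis. This is the standard textbook proof and there is no gap.

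The paper takes a different route. Rather than arguing directly, it invokes results of Uhlig and Calabi: first, that a two-dimensional subspace of $\SSS^n$ (for $n\geq 3$) contains a definite matrix if and only if the associated pencil of quadrics has no real zero in $\PP^{n-1}$; second, that any pencil with no real zeros is simultaneously diagonalizable over $\RR$. So the paper passes through the geometric characterization ``no common real zero'' as an intermediate step, while you use the defining property $\mathcal{L}\cap\SSS^n_{\succ 0}\neq\emptyset$ directly. Your approach is shorter and needs no external references; the paper's detour has the side benefit of recording the equivalent characterization of $d$-pencils, which is of independent interest even though it is not logically required for the lemma as stated.
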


\begin{proof}
We assume $n \geq 3$. 
A pencil is a $d$-pencil if and only if it has no zeros in the
real projective space $\PP^{n-1}$. This is the
Main Theorem in \cite{uhlig79}. It was also proved
by Calabi in \cite{calabi}. The fact that pencils without real zeros in $\PP^{n-1}$
can be diagonalized is \cite[page 221, (PM)]{uhlig79}.
It is also Remark 2 in \cite[page 846]{calabi}.
\end{proof}

Suppose there are $r$ distinct elements in $\{a_1,\ldots,a_n\}$.
Theorem \ref{thm:sec4main} implies:

\begin{corollary} \label{cor:mlddpen}
If $\mathcal{L}$ is a $d$-pencil then
 ${\rm mld}(\mathcal{L}) = {\rm deg}(\mathcal{L}^{-1}) = r-1$ and
 ${\rm rmld}(\mathcal{L}) = 2r-3$,
 where $\mathcal{L}$ has $r$ distinct eigenvalues.
 This holds for all subspaces $\mathcal{L}$ that represent statistical models,
 since such an $\mathcal{L}$ contains positive definite matrices.
\end{corollary}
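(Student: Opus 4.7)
The plan is to reduce to the diagonal form provided by Lemma \ref{lem:a1a2} and then read off the Segre symbol, after which Theorems \ref{thm:eight} and \ref{thm:sec4main} give the three numerical invariants directly. By Lemma \ref{lem:invariance}, both $\mathrm{mld}(\mathcal{L})$ and $\mathrm{rmld}(\mathcal{L})$ are congruence invariants, and the degree $\deg(\mathcal{L}^{-1})$ only depends on the Segre symbol by Theorem \ref{thm:eight}; so there is no loss in replacing $\mathcal{L}$ by the pencil spanned by $A = \mathrm{diag}(a_1,\ldots,a_n)$ and $B = I_n$ granted by Lemma \ref{lem:a1a2}.

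Next I would compute the Segre symbol of this diagonal pencil. Since $B = I_n$, we have $A B^{-1} = A$, which is already in Jordan canonical form with every Jordan block of size $1$. Grouping the diagonal entries by value, let the $r$ distinct elements of $\{a_1,\ldots,a_n\}$ occur with multiplicities $m_1,\ldots,m_r$, where $m_1 + \cdots + m_r = n$. Then the partition attached to the $i$-th eigenvalue consists of $m_i$ parts all equal to $1$, so
\[
\sigma(\mathcal{L}) \,=\, [\,(1^{m_1}),\,(1^{m_2}),\,\ldots,\,(1^{m_r})\,],
\]
and in particular $\sigma_{i1} = 1$ for every $i = 1,\ldots,r$.

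Now I apply the two main theorems. Theorem \ref{thm:eight} yields
\[
\deg(\mathcal{L}^{-1}) \,=\, \sum_{i=1}^r \sigma_{i1} - 1 \,=\, r - 1,
\]
and the formulas (\ref{eq:mlformulas}) of Theorem \ref{thm:sec4main} give
\[
\mathrm{mld}(\mathcal{L}) \,=\, r - 1 \qquad \text{and} \qquad \mathrm{rmld}(\mathcal{L}) \,=\, \sum_{i=1}^r \sigma_{i1} + r - 3 \,=\, 2r - 3.
\]
Finally, any linear subspace $\mathcal{L} \subset \SSS^n$ that meets $\SSS^n_{\succ 0}$ is a $d$-pencil by the definition preceding Lemma \ref{lem:a1a2}, so the preceding computation applies to every pencil representing a Gaussian statistical model. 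There is essentially no obstacle here: the only point to check carefully is the Jordan/Segre reading of the diagonal pair $(A,I)$, which is immediate.
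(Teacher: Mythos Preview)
Your proof is correct and follows exactly the route the paper intends: the paper states the corollary immediately after Lemma~\ref{lem:a1a2} with the words ``Theorem~\ref{thm:sec4main} implies,'' leaving the reader to diagonalize via Lemma~\ref{lem:a1a2}, read off the Segre symbol $[(1^{m_1}),\ldots,(1^{m_r})]$, and plug $\sigma_{i1}=1$ into Theorems~\ref{thm:eight} and~\ref{thm:sec4main}. You have simply made this implicit argument explicit.
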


The log-likelihood  function for our $d$-pencil $\mathcal{L}$ can be written as follows:
$$ \ell_S(x,y) \,\,=\,\, \sum_{i=1}^n\, \bigl( \,{\rm log}(a_i x + y) - s_i (a_i x + y) \,\bigr). $$
Here $s_1,\ldots,s_n  \in \RR$ represent data.
The MLE is the maximizer of $\ell_S(x,y)$ over the 
cone $\{(x,y) \in \RR^2 :a_i x + y > 0 \,\,{\rm for} \,\,i=1,\ldots,n\}$.
Corollary \ref{cor:mlddpen} says that $\ell_S(x,y)$ has $r-1$ critical points. One of them
is the MLE.
The reciprocal log-likelihood~is
\begin{equation}
\label{eq:recell} \begin{small}
\tilde \ell_S(x,y) \,\,=\,\, \sum_{i=1}^n \,\Bigl( \,-{\rm log}(a_i x + y) \,-\, \frac{s_i}{a_i x + y} \,\Bigr).  
\end{small}
 \end{equation}
The invariant ${\rm rmld}(\mathcal{L})$ is the number of critical points 
$(x^*,y^*)$ of this function with $\prod_{i=1}^n(a_i x^* + y^*) \not= 0$,
provided $s = (s_1,\ldots,s_n)$ is generic in $\RR^n$. 
Corollary \ref{cor:mlddpen} states that $\tilde \ell_S(x,y)$ has $2r-3$ complex critical points. One of them is the~MLE.

The following is an extension of a conjecture stated by
Coons et al.~\cite[\S 6]{CMR}.

\begin{conjecture}
Let $\mathcal{L}$ be a $d$-pencil with $r$ distinct eigenvalues.
There exists $s = (s_1,\ldots,s_n) \in \RR^n$ such that 
the function  (\ref{eq:recell}) has $2r-3$ distinct real critical~points.
\end{conjecture}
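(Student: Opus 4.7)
The plan is to reduce the conjecture to a single univariate real-rootedness question and attack it by deformation from a symmetric configuration. By Lemma \ref{lem:a1a2}, I may assume that $\mathcal{L}$ is spanned by $A = \mathrm{diag}(a_1,\ldots,a_n)$ and $B = I$; let $\alpha_1,\ldots,\alpha_r$ be the distinct values of the $a_i$, with multiplicities $m_1,\ldots,m_r$. The reciprocal log-likelihood $\tilde\ell_S(x,y)$ in (\ref{eq:recell}) depends on $s$ only through the grouped sums $t_k := \sum_{i : a_i = \alpha_k} s_i$, so we are free to prescribe $(t_1,\ldots,t_r) \in \RR^r$ arbitrarily and ask whether some such choice yields $2r-3$ real critical points.

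Setting $w = y/x$, $f(w) = \sum_k t_k/(\alpha_k+w)$, $g(w) = \sum_k m_k/(\alpha_k+w)$ and $Q(w) = \prod_k (\alpha_k+w)$, I would eliminate one variable from the critical system $\partial \tilde\ell_S/\partial x = \partial \tilde\ell_S/\partial y = 0$. A short computation shows that the system collapses to $f(w)g(w) + N f'(w) = 0$ with $N = n$, or equivalently to the polynomial equation
\[
F(w) \,:=\, Q(w)^2\bigl(g(w) f(w) + N f'(w)\bigr) \,=\, 0.
\]
The leading coefficients of the two summands cancel, so $\deg F \leq 2r-3$, with equality for generic $t$. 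This reproves the count in Corollary \ref{cor:mlddpen} and turns the conjecture into the real-algebraic question: for some $t \in \RR^r$, does $F$ have $2r-3$ simple real roots, none equal to any $-\alpha_k$?

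To produce such a $t$, I would first analyze the symmetric line $t_k = c\, m_k$, for which $f = cg$ and $F$ becomes a scalar multiple of $Q^2(g^2 + Ng')$. Since $g$ is a Cauchy sum with positive residues, Rolle's theorem places $r-1$ real zeros of $g$ interlacing the poles $-\alpha_k$, and the real solutions of $g^2 + Ng' = 0$ are precisely the critical points of the real-analytic function $h(w) = f(w) \prod_k (\alpha_k+w)^{m_k/N}$; a sign analysis on the $2r-1$ real intervals cut out by the poles and zeros of $g$ should account for some of these. The remaining real roots would then be obtained by perturbing $t$ off the symmetric line and tracking roots by continuity, ideally packaged as a Hermite--Biehler interlacing certificate.

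The main obstacle is genuine rather than technical. The specialization $m_1 = \cdots = m_n = 1$ of the conjecture recovers the Coons--Marigliano--Ruddy conjecture \cite{CMR}, which is itself still open, so no purely deformation-based argument on ``our'' side can succeed without independently resolving that generic case. A completed proof requires a new positivity certificate for $F$. A sensible intermediate target is to verify the conjecture by discriminant computations for $r \leq 4$, covering every Segre symbol in Examples \ref{ex:n3} and \ref{ex:n4}, and then to seek a distinguished $t$ at which $F$ admits a Hurwitz-stable factorization whose zeros interlace combinatorially with $-\alpha_1 < \cdots < -\alpha_r$.
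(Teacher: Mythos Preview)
The statement is a \emph{conjecture} in the paper, not a theorem; the paper offers no proof, only computational verification for a specific pencil with $n\le 7$ by exhibiting explicit data vectors $s$ (the Example immediately following). So there is no argument in the paper to compare yours against.

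Your reduction is correct: eliminating $x$ via $x=-f'(w)/g(w)$ from the second critical equation does collapse the system to $f(w)g(w)+n f'(w)=0$, and the cancellation of the $w^{-2}$ asymptotics of $fg$ against $nf'$ gives $\deg F\le 2r-3$, recovering the count in Corollary~\ref{cor:mlddpen}. You also correctly diagnose the genuine obstruction: the equal-multiplicity case $m_1=\cdots=m_r=1$ is exactly the open Coons--Marigliano--Ruddy conjecture \cite{CMR}, so no deformation-from-symmetric argument can close without first resolving that. What you have written is therefore an honest and useful reformulation, strictly more structural than the paper's purely experimental evidence, but not a proof---and you say so yourself. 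The intermediate target you propose (discriminant checks for small $r$, covering all $d$-pencils with that many eigenvalues) is in fact stronger than what the paper verifies, since the paper only treats the single pencil $(a_1,\ldots,a_n)=(1,\ldots,n)$.
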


We can prove this conjecture for small values of $n$ by explicit computation.

\begin{example} \rm
Fix the pencil $\mathcal{L}$ with $n=r$ and
$(a_1,\ldots,a_n) = (1,\ldots,n)$. For $n \leq 7$ we found
$s \in \RR^n$ such that
the reciprocal log-likelihood function $\tilde \ell_s$ has $2n-3$ distinct real critical~points.
For $n=7$ we can take
$s = (-\frac{74}{39}, \frac{13}{47}, \frac{61}{40}, \frac{1}{7},\frac{23}{18},-73,-\frac{27}{43})  $.
\end{example}

We now return to arbitrary Segre symbols $\sigma$.
While non-diagonalizable pencils $\mathcal{L}$ do not arise in
applied statistics,  their likelihood geometry is interesting.

\begin{proof}[Proof of Theorem \ref{thm:sec4main}]
By Lemma \ref{lem:invariance}, we may assume that $\mathcal{L}$
is parametrized by $(x,y) \mapsto x P - y Q$ with $P$ and $Q$ as in
 (\ref{eq:normalform}). For generic $S \in \SSS^n$, we seek the number ${\rm mld}(\mathcal{L})$
 of critical points in $\CC^2$ of the following  function in two variables:
 \begin{equation}
 \label{eq:loglikePQ}
  \ell_S(x,y)\, \,\,= \,{\rm log}({\rm det}(xP-yQ))  \, - \, {\rm trace}( S (xP-yQ)).   
  \end{equation}
  After multiplying by  $d = \prod_{i=1}^r (\alpha_i x - y)$,
 the two partial derivatives of $\ell_S(x,y)$ have the form
$ f(x,y)\,= \,\lambda_S d\,+\,C\,$ and $\, g(x,y) \,=\,\mu_S d\,+\,D$.
Here $\lambda_S = - {\rm trace}(SP)$~and $\mu_S = {\rm trace}(SQ)$ are constants, 
and the following are binary forms of degree~$r-1$:
\begin{equation}
\label{eq:CandD}
  C \,\,= \,\,\sum_{i=1}^r \sum_{j=1}^n \sigma_{ij} \,\alpha_i \prod_{k=1,\\ k\neq i}^r (\alpha_k x - y) \quad {\rm and} \quad D \,\,= \,\, - \sum_{i=1}^r \sum_{j=1}^n \sigma_{ij} \prod_{k=1,\\ k\neq i}^r (\alpha_k x - y).
  \end{equation}

The variety of critical points of $\ell_{S}$ in $\CC^2$  is $ V(f,g) \backslash V(d) $.
We adapt the method introduced in \cite{CMR} to enumerate this set.
 Let $F(x,y,z)$ and $G(x,y,z)$ denote the homogenizations of $f$ and $g$ with respect to $z$. 
Both $F$ and $G$  define curves of degree $r$ in $\PP^2$. Since $F$ and $G$ do not share a common component, we can apply B\'ezout's Theorem to count their intersection points. This tells us that
\begin{equation}\label{eq:bezout} \begin{small}
{\rm mld}(\mathcal{L}) \,\,\,= \,\,\, r^2 \,-\,\,\, I_{[0:0:1]} (F,G) \,\,\,- \!\! \sum_{q \in V(F,G,z)} \!\!
I_q(F,G). \end{small}
\end{equation}
The negated expressions
are the intersection multiplicities of $F$ and $G$ at the origin
and on the line at  infinity. By computing these two quantities, we obtain
$${\rm mld}(\mathcal{L})\,\,=\,\,r^2 - (r-1)^2 -r \,\,=\,\, r-1. $$

The proof of the second formula in (\ref{eq:mlformulas})
is analogous but the details are more delicate. We present an outline.
The log-likelihood function for $\mathcal{L}^{-1}$ equals
\begin{equation*} \begin{small} 
\tilde{\ell}_S(x,y) \,\,=\,\, - \, \log\, \Bigl(\prod_{i=1}^r (\alpha_i x - y)^{\sigma_{i1}+\cdots+\sigma_{in}}\Bigr)
\,\, -\,\, \sum_{i=1}^r \sum_{j=1}^{\sigma_{i1}}\tilde{s}_{ij} \, \frac{x^{j-1}}{(\alpha_i x-y)^{j}},
\end{small}
\end{equation*}
where the $\tilde{s}_{ij}$ are linear combinations of the entries in the matrix $S$. This is obtained by
replacing the matrix $xP-yQ$  in  (\ref{eq:loglikePQ}) with its inverse. We find
\begin{equation}\label{eq:reciprocal derivatives}
\begin{small}
\begin{matrix}
\tilde{\ell}_{{S}_x} \,\,= \,\,-\,\displaystyle{\sum_{i=1}^r \sum_{j=1}^n} \frac{\sigma_{ij}\alpha_i}{\alpha_i x - y} 
\,\,+\,\,\sum_{i=1}^r \sum_{j=1}^{\sigma_{i1}} \tilde{s}_{ij} \frac{(j-1)x^{j-2}(\alpha_i x-y)-j \, x^{j-1}\alpha_i}{(\alpha_i x -y)^{j+1}}
\, , \medskip \\
\quad \tilde{\ell}_{{S}_y} \,\,= \, \quad \displaystyle{\sum_{i=1}^r} \sum_{j=1}^n \frac{\sigma_{ij}}{\alpha_i x - y} 
\,\,+\,\,\sum_{i=1}^r \sum_{j=1}^{\sigma_{i1}} \,\tilde{s}_{ij}\,\frac{j \,  x^{j-1}}{(\alpha_i x-y)^{j+1}} .\hspace*{1cm}
\end{matrix}
\end{small}
\end{equation}
We claim that the number of common zeros of 
the two partial derivatives $ \tilde{\ell}_{{S}_x}$ and $\tilde{\ell}_{{S}_y}$
in $\CC^2 \backslash V(d)$
is equal to $\, \varphi + r - 3\,$ where
$\, \varphi= \sum_{i=1}^r \sigma_{i1} = {\rm deg}(\mathcal{L}^{-1}) + 1$,

Clearing denominators in (\ref{eq:reciprocal derivatives}) yields polynomials
$-d' C + U$ and $-d' D + V$, where $d' = \prod_{i=1}^r (\alpha_i x - y)^{\sigma_{i1}}$,
the binary forms $U,V$ have degree $\varphi + r-2$, and
$C,D$ are precisely as in (\ref{eq:CandD}). Hence
${\rm deg}(d')= \varphi$ and ${\rm deg}(C) = {\rm deg}(D) = r-1$.
As before, these are sums of binary forms in consecutive degrees. We
use (\ref{eq:bezout}) to count their zeros in $\PP^2$.
We find
$(\varphi+r-1)^2 - (\varphi+r-2)^2 - (\varphi+r) = \varphi+r-3$
\end{proof}

\begin{example}[$n=5$]  Let $\sigma = [(2,1),2]$ as in Example \ref{ex:212}.
 The ML degrees are
${\rm mld}(\mathcal{L}) = 1$ and ${\rm rmld}(\mathcal{L}) = 3$.
Restricting the log-likelihood function to $\mathcal{L}$ gives
$$ \begin{small}  \ell_S \,=\, 
{\rm log}\bigl((ax-y)^3 (b x-y)^2 \bigr) \,
+\,2 s_{12} (a x-y)+s_{22} x+s_{33} (a x-y)+2 s_{45} (b x-y)+s_{55} x. \end{small}
$$
Its two partial derivatives are rational functions in $x$ and $y$.
Equating these to zero, we find that $\ell_S$ 
has  a unique critical point $(x^*,y^*)$ in $\mathcal{L}$. Its coordinates are
$$ \begin{small} \begin{matrix}
& \!\!\!\! x^* & = & \bigl(\, 4 ( a- b) s_{12}+5 s_{22}+ 2 ( a- b) s_{33}- 6 (b-a) s_{45}+5 s_{55} \,\bigr) \, /\, \Delta,
\qquad \qquad \qquad \qquad \qquad \\
&\!\! \!\! y^* & = &\,\, \bigl(\,
4a (a- b) s_{12}+(2 a+3 b) s_{22}+2a ( a- b) s_{33}+6b (b-a ) s_{45}+(2 a+3 b) s_{55}\,\bigr) /\, \Delta , \\
&
\!\! \Delta & = & \bigl(-s_{22}+2( a- b) s_{45}-s_{55} \bigr) \cdot \bigl(2( a- b) s_{12}+s_{22}+(a-b) s_{33}+s_{55} \bigr).
\qquad \qquad \quad \quad
\end{matrix} \end{small}
$$
The  restriction of the log-likelihood function to the reciprocal variety  $\mathcal{L}^{-1}$ is
$$  \! \tilde{\ell}_S(x,y) \, = \, \begin{small}
- {\rm log}\bigl((a x-y)^3 (b x-y)^2 \bigr) \,\,-\,  \frac{s_{11} \,x}{(a x-y)^2}+ \frac{2 \,s_{12}}{a x-y}
+ \frac{s_{33}}{a x-y} - \frac{s_{44} \,x}{(b x-y)^2}+ \frac{2\, s_{45}}{b x-y}.
\end{small} $$
The two partial derivatives have $3$ zeros, expressible in radicals
in $a,b,s_{11},\ldots,s_{45}$.
\end{example}

\section{Strata in the Grassmannian}
\label{sec5}

We now define a partial order on the set ${\rm Segre}_n$ of all Segre symbols for fixed $n$.
If $\sigma$ and $\tau$ are in  ${\rm Segre}_n$ 
then we say that $\sigma $ {\em is above} $ \tau$ if $|\sigma| > |\tau|$
and $\tau$ is obtained from $\sigma$ by replacing two partitions $\sigma_i, \sigma_j$
by their sum, or if $|\sigma| = |\tau|$ and
$\sigma$ and $\tau$ differ in precisely one partition, 
with index $i$, and $\tau_i \vartriangleleft \sigma_i$ in the dominance order on partitions.
The partial order on ${\rm Segre}_n$ is the transitive closure of the relation ``is above''.
The top element of our poset is $ [1 ,1 ,\ldots, 1]$,
and the bottom element is $[(2 ,1 ,\ldots, 1)]$.
The Hasse diagrams for $n=3,4$ are shown in Figure~\ref{fig:segre34}.

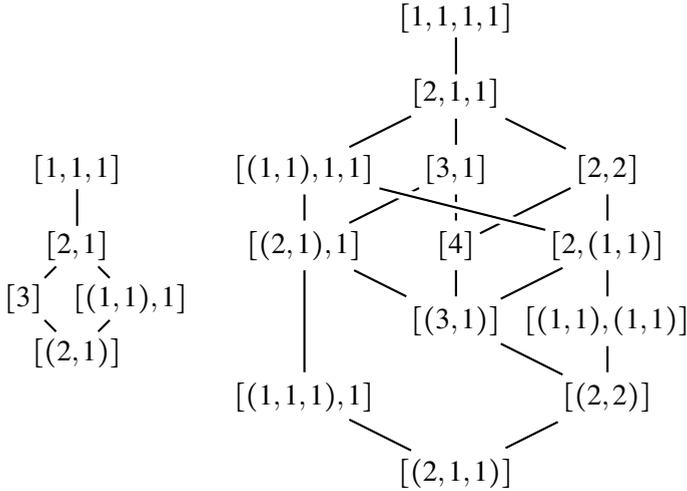
\begin{figure}[h]
\begin{center} 
 \begin{tikzpicture}
  \node (max) at (0, 5) {$[1, 1, 1, 1]$};
  \node (two) at (0,4) {$[2,1,1]$};
  \node (threel) at (-2,3) {$[(1, 1), 1, 1]$};
  \node (three) at (0,3) {$[3, 1]$};
  \node (threer) at (2,3) {$[2, 2]$};
  \node (fourl) at (-2,2) {$[(2, 1), 1]$};
  \node (four) at (0,2) {$[4]$};
  \node (fourr) at (2,2) {$[2, (1, 1)]$};
  \node (five) at (0,1) {$[(3, 1)]$};
  \node (fiver) at (2,1) {$[(1, 1),(1, 1)]$};
  \node (sixl) at (-2,0) {$[(1, 1, 1), 1]$};
  \node (sixr) at (2,0) {$[(2, 2)]$}; 
  \node (min) at (0, -1) {$[(2, 1, 1)]$};
  \draw [black,  thick, shorten <=-2pt, shorten >=-2pt] (min) -- (sixr) -- (fiver) -- (fourr) -- (threer) -- (two) -- (max);
  \draw [black,  thick, shorten <=-2pt, shorten >=-2pt](two) -- (three) -- (four) -- (five) -- (sixr);
  \draw [black,  thick, shorten <=-2pt, shorten >=-2pt](two) -- (threel);
  \draw [black,  thick, shorten <=-2pt, shorten >=-2pt](threel) -- (fourl) -- (three);
  \draw [black,  thick, shorten <=-2pt, shorten >=-2pt](threer) -- (four);
  \draw [black,  thick, shorten <=-2pt, shorten >=-2pt](fourl)--(five) -- (fourr);
  \draw [black,  thick, shorten <=-2pt, shorten >=-2pt](fourl)--(sixl) -- (min);
   \draw[preaction={draw=white, -,line width=4pt}] (threel) -- (fourr);
 \draw [black,  thick, shorten <=-2pt, shorten >=-2pt] (threel) -- (fourr);

 \node (top) at (-5,3) {$[1, 1, 1]$};
    \node [below of =top]  (next) {$[2, 1]$};
    \node [below left  of=next] (left)  {$[3]$};
    \node [below right of=next] (right) {$[(1,1),1]$};
    \node [below left of=right] (bottom) {$[(2,1)]$};

    \draw [black,  thick, shorten <=-2pt, shorten >=-2pt] (top) -- (next);
    \draw [black,  thick, shorten <=-2pt, shorten >=-2pt] (next) -- (left);
    \draw [black,  thick, shorten <=-2pt, shorten >=-2pt] (next) -- (right);
    \draw [black,  thick, shorten <=-2pt, shorten >=-2pt] (right) -- (bottom);
    \draw [black,  thick, shorten <=-2pt, shorten >=-2pt] (left) -- (bottom);

\end{tikzpicture}
\caption{The posets of all Segre symbols for $n=3$ (left) and $n=4$ (right).}
\label{fig:segre34}
\end{center}
\end{figure}

We wish to study the strata ${\rm Gr}_\sigma$ in (\ref{eq:stratum}).
Recall that ${\rm Gr}_\sigma$ is the constructible subset of 
${\rm Gr}(2,\SSS^n)$ whose points are the pencils $\mathcal{L}$
with $\sigma(\mathcal{L}) = \sigma$. Its closure
$\overline{{\rm Gr}}_\sigma$ is a subvariety of the Grassmannian ${\rm Gr}(2,\SSS^n)$.
Its defining equations can be written either in the $\frac{1}{8}(n+2)(n+1)n(n-1)$
Pl\"ucker coordinates, or in the $(n+1)n\,$ Stiefel coordinates
which are the matrix entries in a basis $\{A,B\}$ of $\mathcal{L}$.

Consider the related {\em Jordan stratification}.
 For each $\sigma \in {\rm Segre}_n$,
the Jordan stratum ${\rm Jo}_\sigma$ is the set of $n \times n$ matrices
whose Jordan canonical form has pattern~$\sigma$. Its closure
$\overline{{\rm Jo}}_\sigma$ is an affine variety in $\CC^{n \times n}$.
Its defining prime ideal consists of homogeneous polynomials
in the entries of an $n \times n$ matrix $X = (x_{ij})$.

\begin{theorem} \label{thm:poset} Our poset models inclusions of both
Grassmann strata and Jordan strata. That is,
 $\,\sigma \succeq \tau\,$ in ${\rm Segre}_n$ if and only if
$\,\,\overline{{\rm Gr}}_\sigma \supseteq \overline{{\rm Gr}}_\tau\,\,$  if and only if
$\,\,\overline{{\rm Jo}}_\sigma \supseteq \overline{{\rm Jo}}_\tau$.
 \end{theorem}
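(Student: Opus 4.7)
The plan is to establish (a) $\iff$ (c) from the classical theory of conjugacy orbit closures in $M_n(\CC)$, and (b) $\iff$ (c) through a bridge between pencils and their ``companion'' matrices $AB^{-1}$. Here (a), (b), (c) denote the three relations $\sigma \succeq \tau$, $\overline{{\rm Gr}}_\sigma \supseteq \overline{{\rm Gr}}_\tau$, and $\overline{{\rm Jo}}_\sigma \supseteq \overline{{\rm Jo}}_\tau$, respectively.

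For (a) $\iff$ (c), I match the two covering relations of the poset ${\rm Segre}_n$ with the two atomic codimension-one degenerations of the Jordan stratification. A dominance cover $\tau_i \vartriangleleft \sigma_i$ on a single partition is realized by the classical Gerstenhaber--Hesselink degeneration within a fixed-eigenvalue orbit. A pointwise-sum cover $\sigma_i, \sigma_j \to \sigma_i + \sigma_j$ arising from eigenvalue collision is realized by an $\epsilon$-dependent conjugation, as illustrated by the prototype
\[
\begin{pmatrix} 1 & 0 \\ \epsilon^{-1} & 1 \end{pmatrix}
\begin{pmatrix} \alpha & 0 \\ 0 & \alpha+\epsilon \end{pmatrix}
\begin{pmatrix} 1 & 0 \\ -\epsilon^{-1} & 1 \end{pmatrix}
\;=\;
\begin{pmatrix} \alpha & 0 \\ -1 & \alpha+\epsilon \end{pmatrix}
\;\xrightarrow{\;\epsilon\to 0\;}\;
\begin{pmatrix} \alpha & 0 \\ -1 & \alpha \end{pmatrix},
\]
which produces a $2\times 2$ Jordan block, corresponding to the pointwise sum $(1)+(1)=(2)$, rather than the concatenation $(1,1)$ that a naive specialization would give. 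The converse direction---that no additional closure relations exist---follows from upper-semicontinuity of the Jordan pattern under deformation, combined with the standard codimension count for $\overline{{\rm Jo}}_\sigma$ that matches the ranks in the poset.

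For (b) $\iff$ (c), I introduce the double fibration
\[
{\rm Gr}(2,\SSS^n)^{\rm reg} \;\xleftarrow{\;\pi_1\;}\; \mathcal{F} \;\xrightarrow{\;\pi_2\;}\; M_n(\CC),
\qquad
\mathcal{F} \;=\; \{(A,B)\in \SSS^n\times\SSS^n : \det B \neq 0\},
\]
with $\pi_1(A,B)=\CC\{A,B\}$, a ${\rm GL}(2)$-principal bundle on the regular locus, and $\pi_2(A,B)=AB^{-1}$, dominant onto $M_n(\CC)$. By Section~\ref{sec2}, the Segre symbol of the pencil $\pi_1(A,B)$ equals the Jordan pattern of the matrix $\pi_2(A,B)$, so
\[
\pi_1^{-1}({\rm Gr}_\sigma) \;=\; \pi_2^{-1}({\rm Jo}_\sigma) \qquad \text{as subsets of } \mathcal{F}.
\]
Both $\pi_1$ and $\pi_2$ are flat with irreducible fibers; for $\pi_2$ the fiber over $M$ is the nonempty open subset of the affine space $\{B \in \SSS^n : MB=BM^T\}$ on which $B$ is invertible. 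Hence Zariski closure commutes with preimage under both maps, which forces the analogous identity $\pi_1^{-1}(\overline{{\rm Gr}}_\sigma) = \pi_2^{-1}(\overline{{\rm Jo}}_\sigma)$, from which (b) $\iff$ (c) is immediate.

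The principal obstacle is the explicit $\epsilon$-conjugation construction in step (a) $\Rightarrow$ (c) for a general pointwise-sum cover: given partitions $\sigma_i, \sigma_j$, one must exhibit a family $g(\epsilon)\in{\rm GL}(n)$ whose action on a Jordan matrix with blocks $J_{\sigma_i}(\alpha), J_{\sigma_j}(\beta)$ (with $\beta\to\alpha$) has a limit of Jordan type $\sigma_i+\sigma_j$, not merely the concatenation $\sigma_i\sqcup\sigma_j$. I would carry this out by induction on the parts of $\sigma_i, \sigma_j$, generalizing the $2\times 2$ prototype above while tracking the $\epsilon$-orders of matrix entries to ensure the necessary cancellations. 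A secondary technical point is verifying flatness and fiber-irreducibility of $\pi_2$, which can be done by exhibiting an explicit parametrization of a dense open subset of the symmetric intertwiner variety using the canonical form (\ref{eq:normalform}).
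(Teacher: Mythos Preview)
Your strategy is essentially the paper's: invoke Gerstenhaber's theorem for the equivalence (a)$\Leftrightarrow$(c), then bridge to (b) via the companion map $(A,B)\mapsto AB^{-1}$. You are more explicit than the paper, which dispatches the bridge in two sentences, and your observation that an eigenvalue collision requires an $\epsilon$-dependent conjugation (not merely setting $\alpha_j=\alpha_i$ in the normal form) is exactly right and only implicit in the paper's citation of \cite[Theorem~4]{Ger}.

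There is, however, a genuine error in your double-fibration argument: $\pi_2$ is \emph{not} flat. Its fiber over $M$ is the invertible locus in $V(M)=\{B\in\SSS^n:MB=BM^T\}$, and $\dim V(M)=\sum_{i,j}\binom{\sigma^*_{ij}+1}{2}$ depends on the Jordan type $\sigma$ of $M$; this is precisely the stabilizer count appearing in Proposition~\ref{prop:codim}. Already for $n=2$ one has $\dim V(I)=3$ versus $\dim V(\mathrm{diag}(1,2))=2$. In fact $\pi_2$ is not even open: if $B_0\in\SSS^2$ is invertible and non-diagonal, no neighbourhood of $(B_0,B_0)$ in $\mathcal{F}$ hits $\mathrm{diag}(1,1{+}t)$ for small $t\neq 0$, since that would force $B$ diagonal. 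So the step ``Zariski closure commutes with preimage under $\pi_2$'' fails, and unpacking your deduction of $\pi_1^{-1}(\overline{{\rm Gr}}_\sigma)=\pi_2^{-1}(\overline{{\rm Jo}}_\sigma)$ shows it already presupposes (c)$\Rightarrow$(b).

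The repair is to close the triangle asymmetrically rather than via a symmetric bridge. The direction (b)$\Rightarrow$(c) needs only continuity of $\pi_2$ together with the fact that $\overline{{\rm Jo}}_\sigma$ is a union of full Jordan strata (upper semicontinuity of the ranks of $(M-\lambda I)^k$). Combined with your (a)$\Leftrightarrow$(c), it remains to prove (a)$\Rightarrow$(b) directly: for each covering relation, construct an explicit one-parameter family of \emph{pencils} realising the degeneration. For dominance covers this is the symmetric-pair version of the nilpotent-orbit degeneration; for eigenvalue collisions it is the congruence analogue of your $\epsilon$-conjugation, carried out on the normal forms $(P_\sigma(\alpha),Q_\sigma)$. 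This is what the paper gestures at with ``inclusions of orbit closures are preserved under $X\mapsto AB^{-1}$,'' though it does not spell it out either.
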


The codimensions of the Jordan strata generally  differ from those of the
Grassmann strata.
While the $\overline{{\rm Jo}}_\sigma $ are familiar from linear algebra \cite{DE},
the $ \overline{{\rm Gr}}_\sigma  $ capture the geometry of the
varieties listed on the right in  Examples \ref{ex:n3} and~\ref{ex:n4}.
The codimensions are $\geq 1$, unless $\sigma= [1,\ldots,1]$
where both strata are dense.

\begin{example}[$n=3$] \label{ex:strata3}
We computed the prime ideals for the Jordan strata in $\CC^{3 \times 3}$,
for the Pl\"ucker strata in ${\rm Gr}(2,\SSS^3) \subset \PP^{14}$,
and for the Stiefel strata in $\PP^5 {\times} \PP^5 $:
 $$
  \begin{small}
\begin{matrix} 
\hbox{symbol} & {\rm Jordan}  & \hbox{Pl\"ucker} & {\rm Stiefel} & \! \!\! {\rm codims}\! \! & {\rm degrees} 
  \\ [2,1]  & 6_1  & 6_1 &  (6,6)_1 & 1,1,1 &   6,6,[6,6] \\
[\,3\,]  & 2_1,3_1  & 4_{21} & \!\! (2,4)_1,(3,3)_1,(4,2)_1 \!\! & 2,2,2 & 6,99, [6,15,6] \\
[(1,\!1),1] & 3_{20} & 3_{20} & (3,3)_{20} &3,2,2 & 6,36,[4,4,4] \\
[(2,1)] & 2_9 & 2_6 & (2,2)_6 & 4,3,3 &  6,56,[4,12,12,4]
\end{matrix}
\end{small}
$$
The sextic in the first row is
 the discriminant of the characteristic polynomial of $X$.
 We shall explain the last row, indexed by $\sigma = [(2,1)]$.
 The Jordan stratum ${ \rm Jo}_\sigma $ has codimension $4$ and degree $6$.
 Its ideal is generated by nine quadrics, like
$ x_{11} x_{31}-2 x_{22} x_{31}+3 x_{21} x_{32} + x_{31} x_{33}$.
Under the substitution $X = AB^{-1}$, these transform into
six quadrics in Pl\"ucker coordinates, like
$ p_{04 } p_{14}+p_{12} p_{14}-p_{03} p_{15}-p_{12} p_{23}-3 p_{02} p_{34}+2 p_{01} p_{35}$.
 Here $p_{01},p_{02},\ldots,p_{45}$ denote the $2 \times 2$ minors of
 $$ \begin{small} \begin{pmatrix}
 a_{11} & a_{12} & a_{13} & a_{22} & a_{23} & a_{33} \\
 b_{11} & b_{12} & b_{13} & b_{22} & b_{23} & b_{33} \\
 \end{pmatrix} \end{small}.
 $$
 The stratum ${ \rm Gr}_\sigma $ has codimension $3$ in ${\rm Gr}(2,\SSS^3)$
 and degree $56$ in the ambient $\PP^{14}$.
 The six Pl\"ucker quadrics  give six polynomials of bidegree $(2,2)$ in
 $(A,B)$. These define a variety of multidegree
$4 a^3 + 12 a^2b + 12 ab^2 + 4 b^3 \,\in\, H^*( \PP^5 \times \PP^5)$.
\end{example}

\begin{example}[$n=4$]
The column ``codims'' in Example \ref{ex:n4}
gives the codimensions of Jordan strata, Pl\"ucker strata and Stiefel strata.
The last two agree; they quantify the moduli of quartic
curves in $\PP^3$  listed on the right.
We found equations of low degree for the $13$ strata.
For instance, ${\rm Jo}_{[4]}$  lies on a unique quadric:
$$ 
\begin{matrix} 3 x_{11}^2-2 x_{11} x_{22}-2x_{11} x_{33}- 2 x_{11} x_{44}
+8 x_{12}x_{21}+8x_{13} x_{31}+8 x_{14} x_{41} +3 x_{22}^2 \\ -2x_{22}x_{33}-2x_{22}x_{44}
+8 x_{23} x_{32}+8 x_{24} x_{42}+3 x_{33}^2-2 x_{33}x_{44}+8 x_{34} x_{43}+3 x_{44}^2. 
\end{matrix} 
$$
\end{example}

\begin{proof}[Proof of Theorem \ref{thm:poset}]
For Segre symbols $\sigma$ with one partition $\sigma_1$,
the Jordan strata ${\rm Jo}_\sigma$ are the {\em nilpotent orbits}
of Lie type $A_{n-1}$. Gerstenhaber's Theorem \cite{Ger} states that inclusion
of nilpotent orbit closures corresponds to the dominance order $ \vartriangleleft $
among the partitions $\sigma_1$. This explains the second condition
in our definition of ``is above'' for the poset ${\rm Segre}_n$.
The other condition captures the degeneration that occurs
when two eigenvalues come together. Generally, this leads to a 
fusion of Jordan blocks, made manifest by adding partitions
$\sigma_i$ and $\sigma_j$. For a precise algebraic version
of this argument
we refer to \cite[Theorem 4]{Ger}.

The inclusions of orbit closures
are preserved under the map  $X \mapsto A B^{-1}$ that links
Stiefel strata to Jordan strata. Furthermore, the Pl\"ucker
stratification is obtained from the Stiefel stratification
by taking the quotient modulo ${\rm GL}(2)$. This operation
also preserves the combinatorics of orbit closure inclusions.
\end{proof}

We close with formulas for the dimensions of our strata. For each partition 
$\sigma_i$ occurring in a Segre symbol $\sigma = [\sigma_1,\ldots,\sigma_r]$,
we write  $\sigma_i^* = (\sigma^*_{i1},\ldots,\sigma^*_{in}) $
for the conjugate partition. For instance, if $n=5$ and
$\sigma_i = (4,1)$ then $\sigma_i^* = (2,1,1,1)$.

\begin{proposition} \label{prop:codim}
The codimension of the Jordan strata  (in $\CC^{n    \times n}$)
and Grassmann strata (in ${\rm Gr}(2,\SSS^n)$)~are:
$$ {\rm codim} ({\rm Jo}_\sigma) \,\,\, = \,\,\,
\sum_{i=1}^r \sum_{j=1}^n (\sigma^*_{ij})^2 \,\,-\,\, r  \quad \,\,{\rm and} \quad\,\,
 {\rm codim} ({\rm Gr}_\sigma) \,\,\, = \,\,\,
\sum_{i=1}^r \sum_{j=1}^n \binom{\sigma^*_{ij}+1}{2} \,\,-\,\, r . $$
\end{proposition}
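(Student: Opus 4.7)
The plan is to derive both formulas by relating the Grassmann stratification to the Jordan stratification via the rational map $(A,B) \mapsto AB^{-1}$ from Section~\ref{sec2}. For the Jordan piece, the ${\rm GL}(n)$-orbit of $X \in \CC^{n \times n}$ has codimension $\dim Z(X)$, where $Z(X) = \{C : CX = XC\}$. Classically, when $X$ has a single eigenvalue and Jordan type $\mu$ one has $\dim Z(X) = \sum_j (\mu^*_j)^2$ (see \cite{DE}). For a Segre symbol $\sigma = [\sigma_1,\ldots,\sigma_r]$, the centralizer splits as a direct sum over distinct generalized eigenspaces, so $\dim Z(X) = \sum_{i,j}(\sigma^*_{ij})^2$. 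Since ${\rm Jo}_\sigma$ fibers over the $r$-dimensional variety of unordered eigenvalue configurations, $\dim {\rm Jo}_\sigma = r + n^2 - \sum_{i,j}(\sigma^*_{ij})^2$, which gives the first codimension formula.

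For the Grassmann piece, let ${\rm St}_\sigma \subset \SSS^n \times \SSS^n$ denote the open set of pairs $(A,B)$ with $\det B \neq 0$ and Segre symbol $\sigma$, and consider the surjection $\pi\colon {\rm St}_\sigma \to {\rm Jo}_\sigma$, $(A,B) \mapsto AB^{-1}$. The fiber over $X$ consists of invertible symmetric $B$ with $A = XB$ symmetric, equivalently $XB = BX^T$; these are the invertible elements of the linear space $V_X = \{B \in \SSS^n : XB = BX^T\}$. The key claim is that $\dim V_X = \sum_{i,j} \binom{\sigma^*_{ij}+1}{2}$. To prove this, observe that the full intertwiner space $W_X = \{B \in \CC^{n \times n} : XB = BX^T\}$ is isomorphic to $Z(X)$ via right multiplication by $Q^{-1}$, where $Q$ is the canonical symmetric invertible intertwiner of $X$ built from the blocks in (\ref{eq:blocks}); hence $\dim W_X = \sum_{i,j}(\sigma^*_{ij})^2$. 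The transposition $B \mapsto B^T$ is an involution on $W_X$ whose $+1$-eigenspace is precisely $V_X$. Decomposing $W_X$ into blocks $W_{ab}$ indexed by ordered pairs of Jordan blocks of sizes $\mu_a, \mu_b$ sharing a common eigenvalue, one has $\dim W_{ab} = \min(\mu_a,\mu_b)$. The transposition swaps $W_{ab}$ with $W_{ba}$, contributing $\min(\mu_a,\mu_b)$ to $V_X$ for each unordered pair $a < b$, and a direct computation with (\ref{eq:blocks}) shows that every intertwiner in the diagonal block $W_{aa}$ is automatically symmetric, contributing $\mu_a$. Summing and applying the identities $2\binom{k+1}{2} = k^2 + k$ and $\sum_{i,j} \sigma^*_{ij} = n$ yields the claimed dimension for $V_X$.

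Finally, $\pi$ has equidimensional fibers of dimension $\dim V_X$, and ${\rm St}_\sigma \to {\rm Gr}_\sigma$ is a ${\rm GL}(2)$-bundle, so codimensions in the two ambient spaces agree. Using $\dim(\SSS^n \times \SSS^n) = n(n+1)$, the arithmetic collapses to
$$ {\rm codim}\,{\rm Gr}_\sigma \,\,=\,\, n(n+1) - \dim {\rm Jo}_\sigma - \dim V_X \,\,=\,\, \sum_{i,j} \binom{\sigma^*_{ij}+1}{2} - r. $$
The main obstacle is the symmetry claim for $W_{aa}$: verifying that every intertwiner of a single Jordan block with itself is automatically a symmetric matrix is the crux of the dimension count, and requires a careful coordinate computation using the explicit antidiagonal structure of the pair $(P_e(\alpha), Q_e)$. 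Once that is in hand, the remaining steps reduce to routine bookkeeping with conjugate partitions.
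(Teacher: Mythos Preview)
Your argument is correct. For the Jordan codimension it matches the paper's approach (centralizer dimension plus $r$ moduli for the eigenvalues, with a pointer to \cite{DE,Ger}). For the Grassmann codimension, however, you take a genuinely different route. The paper observes that ${\rm codim}({\rm Gr}_\sigma)$ equals the dimension of the stabilizer of a generic pair $(A,B)$ under simultaneous congruence, minus $r$, and then quotes that stabilizer dimension from \cite[Corollary~2.2]{Dmy}, where it is obtained via the Kronecker canonical form for symmetric pencils; the paper's displayed chain of equalities just rewrites Dmytryshyn--K{\aa}gstr\"om--Sergeichuk's $\sum_{i\leq i'} \min(h_i,h_{i'})$ as $\sum_{i,j}\binom{\sigma^*_{ij}+1}{2}$. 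You instead fiber the Stiefel stratum over the Jordan stratum via $(A,B)\mapsto AB^{-1}$ and compute the fiber $V_X=\{B\in\SSS^n:XB=BX^T\}$ directly, using the transposition involution on the full intertwiner space $W_X\cong Z(X)$. This is more self-contained: it derives the symmetric-pencil dimension from the classical centralizer formula rather than importing a separate result on pencils, at the cost of having to justify equidimensionality of the fibers and the block analysis of the involution.

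One remark on what you flag as the main obstacle. The symmetry of every element of the diagonal block $W_{aa}$ does not actually require a coordinate computation with (\ref{eq:blocks}). Since $P_e(\alpha)$ and $Q_e$ are both symmetric and $X_e=P_e(\alpha)Q_e^{-1}$, one has $Q_e X_e^T Q_e^{-1}=X_e$. For a single Jordan block $Z(X_e)=\CC[X_e]$, so any $C\in Z(X_e)$ satisfies $C^T=p(X_e^T)=Q_e^{-1}p(X_e)Q_e=Q_e^{-1}CQ_e$, whence $(CQ_e)^T=Q_eC^T=CQ_e$. Thus $W_{aa}=Z(X_e)\cdot Q_e\subset\SSS^{e}$ automatically, and your ``crux'' becomes a two-line conceptual observation.
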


\begin{proof}
The dimension is the number $r$ of distinct eigenvalues plus the dimension
of the ${\rm GL}(n)$-orbit of the general matrix or pencil in the stratum of interest.
Thus, the codimension is the dimension of its stabilizer subgroup minus $r$.
The codimension for Grassmann strata agrees with 
the codimension for Stiefel strata, so we 
may consider pairs of matrices $(A,B)$ when
determining $ {\rm codim} ({\rm Gr}_\sigma)$.

The stabilizer on the left is found in \cite[Theorem 2.1]{DE} or
\cite[Proposition~8]{Ger}, using the identity $\sum_{k=1}^s (2k-1) =
s^2$. The stabilizer dimension on the right is calculated in
\cite[Corollary 2.2]{Dmy} for general symmetric matrix pencils. 
For regular pencils, the case studied here, the Kronecker canonical
form in \cite[eqn.~(2.4)]{Dmy} only has $H$-components. Thus the dimension formula in \cite{Dmy}
becomes $d_{A, B} =
d_H + d_{HH}$, where $d_H = 0$ and $d_{HH} = \sum_{i \leq i', \lambda_i =
  \lambda_{i'}} \min(h_i, h_{i'})$. In our notation, this~is
$$ \sum_{i \leq k, \alpha_i = \alpha_k} \!\! \min(e_i, e_k) \,\,=\,\,
\sum_{i=1}^r \sum_{k=1}^n k \sigma_{ik}  \,\,=\,\,
 \sum_{i=1}^r \sum_{k=1}^n \sum_{j=1}^{\sigma_{ik}} k
\,\, =\,\, \sum_{i=1}^r \sum_{j=1}^n \sum_{k=1}^{\sigma^*_{ij}} k
\,\, =\,\, \sum_{i=1}^r\sum_{j=1}^n \binom{\sigma^*_{ij} +1}{2}
.$$
In conclusion, our proof consists of specific pointers to the articles \cite{DE, Dmy, Ger}.
\end{proof}

\medskip
\bigskip

\noindent {\bf Acknowledgements.} We thank 
Orlando Marigliano and Tim Seynnaeve for helpful conversations.
Yelena Mandelshtam was supported by a US National Science Foundation
Graduate Research Fellowship under Grant DGE
1752814. Finally, we thank the anonymous referee for
  constructive comments, which helped to improve the paper.

\begin{small}

\end{small}

\end{document}